\documentclass[a4paper]{article}
\usepackage[left=3.5cm,right=3.0cm,top=2.5cm,bottom=2.5cm]{geometry}
\usepackage{xcolor}
\usepackage{graphicx}
\usepackage{cite}
\usepackage{bm}
\usepackage{amsmath,amssymb,amscd,amsxtra,amsfonts}
\usepackage{lipsum}
\usepackage{amsfonts}
\usepackage{graphicx}
\usepackage{multirow}

\usepackage{array}
\usepackage{bm}
\usepackage{amsmath}
\usepackage{amssymb}
\usepackage{verbatim}
\usepackage{stmaryrd}
\usepackage{xcolor}

\newcommand\keywordsname{Key words}
\newcommand\AMSname{AMS subject classifications}

\newenvironment{@abssec}[1]
{\if@twocolumn
\section*{#1}%
\else
\vspace{.05in}\footnotesize
\parindent .2in
{\upshape\bfseries #1. }\ignorespaces
\fi}

{\if@twocolumn\else\par\vspace{.1in}\fi}

\newenvironment{keywords}{\begin{@abssec}{\keywordsname}}{\end{@abssec}}

\providecommand*{\Dist}[2]{\operatorname{dist}({#1};{#2})}   
\providecommand*{\Dist}[2]{\Dist{#1}{#2}}

\newcommand{\be}{\begin{eqnarray}}
\newcommand{\ee}{\end{eqnarray}}

\newcommand{\ben}{\begin{eqnarray*}}
\newcommand{\een}{\end{eqnarray*}}

\newtheorem{theorem}{Theorem}[section]
\newtheorem{lemma}[theorem]{Lemma}%
\newtheorem{remark}{Remark}%

\numberwithin{equation}{section}
\newtheorem{proof}{\textbf{Proof.}}

\title{Optimal error estimate of an isoparametric upwind discontinuous Galerkin method for radiation transport equation on curved domains}
\author{Changhui Yao \thanks{C. Yao. School of Mathematics and Statistics, Zhengzhou University, 450001, Zhengzhou, China. (chyao@lsec.cc.ac.cn)}
\and
Yunpan Ma
\thanks{Y. Ma. School of Mathematics and Statistics, Zhengzhou University, 450001, Zhengzhou, China. (yunpanma@gs.zzu.edu.cn)}
\and
Lingxiao Li
\thanks{L. Li (\textbf{Corresponding Author}). School of Mathematics and Statistics, Henan University, 475004, Kaifeng, China. (lilingxiao@lsec.cc.ac.cn)}}
\begin{document}
\date{2026-05-06}
\maketitle

\begin{abstract}
 In recent years, high-order finite element methods on high-order meshes have attracted considerable attention. This work investigates the isoparametric upwind discontinuous Galerkin method for the radiation transport equation on a bounded domain with a piecewise $C^{k+1}$ smooth curved boundary. We use the isoparametric mapping to approximate the curved domain and construct a curved upwind discontinuous Galerkin scheme.
The first-order hyperbolic nature and the complexity introduced by non-affine transformation, lead to additional difficulties for geometric approximation, numerical stability and the optimal error estimate.
To address these issues, with the help of an isoparametric auxiliary operator, we first prove that the bilinear form is continuous with respect to the DG norm when its first argument is the isoparametric projection error.
Then the geometric approximation error of inflow boundary of original domain is precisely estimated.
The error order between discrete normal vectors and the continuous ones are also proven.
Finally, the rigorous analysis yields an optimal convergence rate in the DG norm.
Two- and three-dimensional numerical tests are conducted to support the theoretical results.
\end{abstract}

\begin{keywords}
Radiation  transport equation; Upwind discontinuous Galerkin method; Auxiliary mapping; Isoparametric auxiliary operator; Optimal convergence rate
\end{keywords}

\section{Introduction}
\label{intro}

The radiation transport equation (RTE) describes the propagation of photons in scattering media and their interactions with matter, with significant applications in fields such as astrophysics, atmospheric science, and high energy density physics \cite{LEHTIKANGAS2015,MODEST2021,Tang2021,Fu2025}.
A simplified form of the stationary RTE can be derived  for angular discretization by employing the discrete ordinates (SN) method \cite{Lathrop1966,Badri2018,Zong2023}. In this paper, we consider the governing equation
\begin{equation}
\Omega \cdot \nabla I + \sigma I = f \ \  in \ \   D, \quad I = g \ \  on \ \  \Gamma^{-},\label{eq1.1}
\end{equation}
where $I$ is the radiation intensity, $\sigma$ is a bounded absorption coefficient, $\Omega$ is a constant unit discrete direction. $D$ is a bounded curved domain with boundary $\Gamma=\partial D$, $\Gamma^{-} = \{x \in \Gamma: \Omega \cdot \boldsymbol{n}(x) < 0 \}$ is the inflow part of the boundary $\Gamma$, and $ \boldsymbol{n}(x) $ is the unit outward normal vector at $x\in\Gamma$.

When applying high-order finite elements to problems with curved boundaries, the geometric error induced by the usual affine and polynomial meshes generally prevents us from achieving optimal convergence rates, which are typically attainable in the polygonal or polyhedral domain. Consequently, accurate approximation of the boundaries becomes essential. One of the most popular strategies in scientific computing and engineering is the isoparametric finite elements \cite{LENOIR1986,LI2023,AYLWIN2023}, wherein the discrete mesh uses shape functions of the same order as the spatial discretization. This approach has garnered considerable attention, with extensive research devoted to its application across diverse problem
settings \cite{CIARLET1972,LENOIR1986,Cheng2008,DKR2012,BERTRAND2016,Lehrenfeld2018,AYLWIN2023,Garcke2025,LI2025}.
A foundational theoretical analysis of isoparametric finite elements on curved domains was
provided by Ciarlet and Raviart \cite{CIARLET1972}, who examined the combined effect of curved boundaries and numerical integration and established asymptotic error estimates. Subsequently, Lenoir \cite{LENOIR1986} pioneered a curved mesh construction by introducing a mapping from the approximate domain $\Omega_h$ to the curved domain $\Omega$, demonstrating its efficacy for the Poisson equation.

In 2008, Cheng and Shu \cite{Cheng2008} established a crucial theoretical and numerical foundation for the necessity of curvilinear elements in achieving genuinely high-order convergence for moving mesh methods. They constructed a third-order conservative Lagrangian scheme on curvilinear meshes for the compressible Euler equations, showing that without geometrically faithful curved elements, the convergence rate saturates at second order even when higher-order numerical reconstructions are employed. Then in 2012, focusing on applications in high energy density physics (radiation transport must be considered), Dobrev et al \cite{DKR2012} proposed a robust and high-order Lagrangian FEM on moving curved meshes, which greatly motivates the efficient solve of RTE on high order meshes. More recently, in \cite{LI2023}, Li et al. made notable contributions to isoparametric methods, including optimal error estimates for arbitrary Lagrangian-Eulerian FEMs on evolving domains using flat interior simplices and piecewise linear mesh velocities. Subsequently, in \cite{Li2024}, a weak discrete maximum principle for elliptic problems on curvilinear polyhedra is analyzed, which leads to a maximum-norm best approximation property for the Poisson equation. For Maxwell's equations on curved domain, in \cite{AYLWIN2023} Alywin et al. examined the influence of numerical integration and employed 3D parametric edge element on curved elements, achieving optimal convergence rates. Kashiwabara developed a finite element analysis for a generalized Robin boundary value problem on curved domains based on the extension approach, proving optimal convergence rates for isoparametric elements without requiring boundary nodes to lie exactly on the true boundary \cite{Kashiwabara2025}.

Compared with symmetric second-order equations, the radiation transport equation in this paper is hyperbolic in nature and is typically posed with inflow boundary conditions. The discontinuous Galerkin (DG) finite element method for this type is widely used and highly efficient for discretizing the radiation transport equation, owing to its inherent ability to handle solution discontinuities and complex geometries \cite{HESTHAVEN2008}. A wealth of studies has been devoted to this topic. Lesaint and Raviart made the fundamental work for neutron transport problems using the DG finite element method \cite{LESAINT1974}. Johnson et al. studied scalar transport equations and established optimal $\mathcal{O}(h^{k+\frac{1}{2}})$ convergence rates in the energy norm \cite{JOHNSON1986,PETERSON1991,BREZZI2004}. A pioneering theoretical contribution came from Houston, Schwab, and S\"{u}li, who developed a comprehensive hp-error analysis for both the streamline-diffusion and the discontinuous Galerkin methods, obtaining estimates that are sharp in both the mesh size $h$ and the polynomial order $k$ \cite{HOUSTON2000}. Then they extended their theoretical framework to advection-diffusion-reaction problems, further advancing the understanding of hp-DG methods in more complex physical regimes \cite{HOUSTON2002}. Guermond and Kanschat further advanced the theoretical understanding by analyzing the asymptotic behavior of upwind DG approximations in the diffusive limit, providing rigorous justification for the method's performance in optically thick regimes \cite{Guermond2010}.

As for the numerical aspects, Woods and Palmer conducted the first systematic investigation of high-order DG methods for radiation transport on meshes with curved surfaces in R-Z geometry, demonstrating that curved meshes do not degrade the expected $k+1$ convergence rates \cite{Woods2019}. Building upon this, Panzer et al. conducted numerical experiments for radiation transport equation on curved domains with reentrant faces \cite{PAZNER2021}. In 2024, Olivier and Haut developed high-order finite element second moment methods on curved meshes, further advancing the numerical capability for complex geometries in transport problems \cite{Olivier2024}. However, these existing work have been largely confined to numerical studies, the rigorous theories analyses and optimal error estimates, particularly with curved inflow boundaries, remain less unexplored. The present work aims to fill this gap by constructing an isoparametric upwind DG scheme on curved meshes with rigorous error analysis.

In this work, we construct an isoparametric mapping $F_h$ \cite{LENOIR1986} from a straight reference domain $\widehat{D}$ to a computational domain $D_h$ that closely conforms to the original curved domain. A key aspect is to control the geometric approximation error so that it does not dominate the total discretization error. Following \cite{LENOIR1986, ARNOLD2020, LI2025}, we introduce an auxiliary mapping $\Phi_h: D \to D_h$ to facilitate the error analysis. The analysis then uses the modified DG norm \cite{HOUSTON2000,PAZNER2021} to quantify how much the numerical solution $I_h$ on $D_h$ deviates from a suitably extended solution $\tilde{I}$ of the original problem \cite{CIARLET1978,BRAMBLE1994}.

We rigorously establish the theoretical framework by decomposing the total error into two parts. The first part is the approximation error of the isoparametric finite element space, for which we define an isoparametric auxiliary operator based on a local projection. The second part is the consistency error of the upwind DG method on curved domains. This consistency error arises from the mismatch between the image of the original inflow boundary $\Phi_h(\Gamma^-)$ under the auxiliary mapping, and the inflow boundary $\Gamma_h^-$ of the computational domain.

The error estimation addresses the following essential challenges from the curved domain geometry and the hyperbolic nature of the RTE, respectively.

\begin{itemize}
    \item Existing error analyses for the RTE on polygonal or polyhedral domains rely on a property that the discrete convection term $\Omega\cdot\nabla I_h$ belongs to the finite element space. While this property does not hold on the isoparametric setting, because functions in the functions in isoparametric finite element space are no longer polynomials.

    \item A fundamental geometric inconsistency arises between the inflow boundary of the computational domain $\Gamma_h^-$ and the image of the original inflow boundary under the auxiliary mapping $\Phi_h(\Gamma^-)$. This discrepancy introduces an additional consistency error that must be carefully controlled to derive the error estimate in the DG norm.

    \item The upwind numerical flux couples with the curved geometry through the unit normal vectors on element interfaces. The isoparametric mapping replaces the true unit normal $\boldsymbol{n}$ with a discrete approximation $\boldsymbol{n}_h$, leading to a geometric consistency error by matching the geometric approximation order with the polynomial order of the finite element space.
\end{itemize}

The remainder of the manuscript is structured as follows. Section 2 introduces the three key domains: the reference, computational, and original domains, along with the auxiliary mapping and the isoparametric finite element space. The standard upwind flux discontinuous Galerkin discretization is then formulated to solve the stationary RTE on the computational domain. In Section 3, we present a theoretical analysis demonstrating that the proposed isoparametric DG method with upwind flux achieves the optimal convergence rate in the DG norm. Section 4 provides numerical experiments that confirm the theoretical results. Concluding remarks are given in Section 5.

\section{Isoparametric finite element method on curved mesh}
\label{sec:2.1}
Throughout this work, we use the classical Sobolev spaces. For a domain $D$, an integer $m \geq 0$, and $1 \leq p \leq \infty$, $W^{m,p}(D)$ denotes the space of functions in $L^p(D)$ whose weak partial derivatives of order up to $m$ also lie in $L^p(D)$. The standard norm and semi-norm on this space are denoted by $||\cdot||_{W^{m,p}(D)}$ and $|\cdot|_{W^{m,p}(D)}$, respectively. The Hilbert space is denoted by $H^m(D) = W^{m,2}(D)$ with norm $||\cdot||_{H^{m}(D)}$ and semi-norm $|\cdot|_{H^{m}(D)}$, respectively. The notation $a\lesssim b$ means that $a\leq Cb$ holds for a constant $C >0$ which is independent of the mesh size $h$.
Moreover, Euclidian norm of a vector $\Omega\in \mathbb{R}^3$ will be denoted by $||\Omega||$.
\subsection{Computational domain and auxiliary mapping}
    We consider a bounded curved domain $D \subset \mathbb{R}^d(d=2,3)$ with a Lipschitz continuous and piecewise $C^{k+1}$ smooth boundary $\Gamma$. To this end, we approximate $D$ by a polyhedral domain $\widehat{D}_h$, which is equipped with a quasi-uniform and shape-regular tetrahedral subdivision $\widehat{\mathcal{T}}_h$ of size at most $h$ and is constructed via a piecewise linear interpolation of $\partial D$ (Sec 4.7 in \cite{BRENNER2008}). Moreover, we denote the boundary of $\widehat{\Gamma}_h = \partial\widehat{D}_h$. The mapping from this polyhedral domain to the original domain is denoted by $F:\widehat{D}_h \rightarrow D$.

    To analyze the extent of approximation between $D$ and $\widehat{D}_h$, we make the following assumptions \cite{LI2025,BERTRAND2014,BERTRAND2016,Lehrenfeld2018}:

    \begin{itemize}
      \item[$(H_1)$] The mapping $F:\widehat{D}_h\rightarrow D$ is piecewise $C^{k+1}$ continuous: $ F|_{\widehat{K}}\in C^{k+1}(\widehat{K}),$ $\forall \widehat{K}\in\widehat{\mathcal{T}}_h$, which satisfies $F(\widehat{K})=\widehat{K}$ if all vertices of $\widehat{K}$ are located within the domain $\widehat{D}_h$.
      \item[$(H_2)$] Let $\mathbb{J}_F$ denote the Jacobian matrix of $F$. Then there holds
          $$||\mathbb{I}-\mathbb{J}_F||_{L^\infty(\widehat{D}_h)}\lesssim h.$$
      \item[$(H_3)$] The mapping $F:\widehat{D}_h\rightarrow D$ is invertible, and let $F^{-1}$ denote the inverse of $F$. Then there holds
    $$\max\limits_{\widehat{K}\in \widehat{\mathcal{T}}_h}(||F||_{W^{k+1,\infty}(\widehat{K})}+||F^{-1}||_{W^{k+1,\infty}(\widehat{K})})\lesssim 1.$$
    \end{itemize}

    The assumptions above are mild and are readily satisfied on condition that the mesh $\widehat{\mathcal{T}}_h$ is sufficiently refined (see, e.g., \cite{BERTRAND2016}). Since $\Gamma$ is piecewise $C^{k+1}$ smooth, assumption $(H_1)$ inherently reflects the smoothness properties of the original boundary $\Gamma$. Assumption $(H_2)$ holds when the approximate boundary $\widehat{\Gamma}_h$ adequately resolves the geometry of the curved boundary $\Gamma$. Indeed, for a sufficiently fine mesh $\mathcal{T}_h$, the Hausdorff distance between $\Gamma$ and $\widehat{\Gamma}_h$ is of order $\mathcal{O}(h^{2})$. We also refer to the remarks following Eq.(5.11) in \cite{BERTRAND2014} for further elaboration. Finally, assumption $(H_3)$ requires that the operator $F$ and its inverse $F^{-1}$ remain uniformly bounded with respect to the mesh size $h$. This condition is reasonable in view of the geometric and functional consistency ensured by Assumptions $(H_1)$ and $(H_2)$.

    For any $\widehat{K}\in\widehat{\mathcal{T}}_h$, let $F_{\widehat{K}}\in\mathbb{P}_k(\widehat{K})$ be the $k^{th}$ order Lagrangian interpolation of $F|_{\widehat{K}}$, where $\mathbb{P}_k$ is the space of vector polynomials with degrees no more than $k$. The global interploation $F_h\in \mathbb{C}(\widehat{D}_h)$ is defined by
    \begin{equation*}
      F_h = F_{\widehat{K}},\quad \forall\widehat{K}\in\widehat{\mathcal{T}}_h.
    \end{equation*}
    It defines the computational domain $D_h$ and therefore a subdivision $\mathcal{T}_h$ (see Fig. \ref{fig:isoparametric_mapping})
    \begin{equation*}
      \mathcal{T}_h=\{K=F_h(\widehat{K}):\widehat{K}\in\widehat{\mathcal{T}_h}\},\qquad
      D_h=interior\left(\bigcup\limits_{K\in\mathcal{T}_h}\overline{K}\right).
    \end{equation*}

    \begin{figure}[htbp]
    \centering
    \label{fig:tet}
    {
        \includegraphics[width=0.35\textwidth]{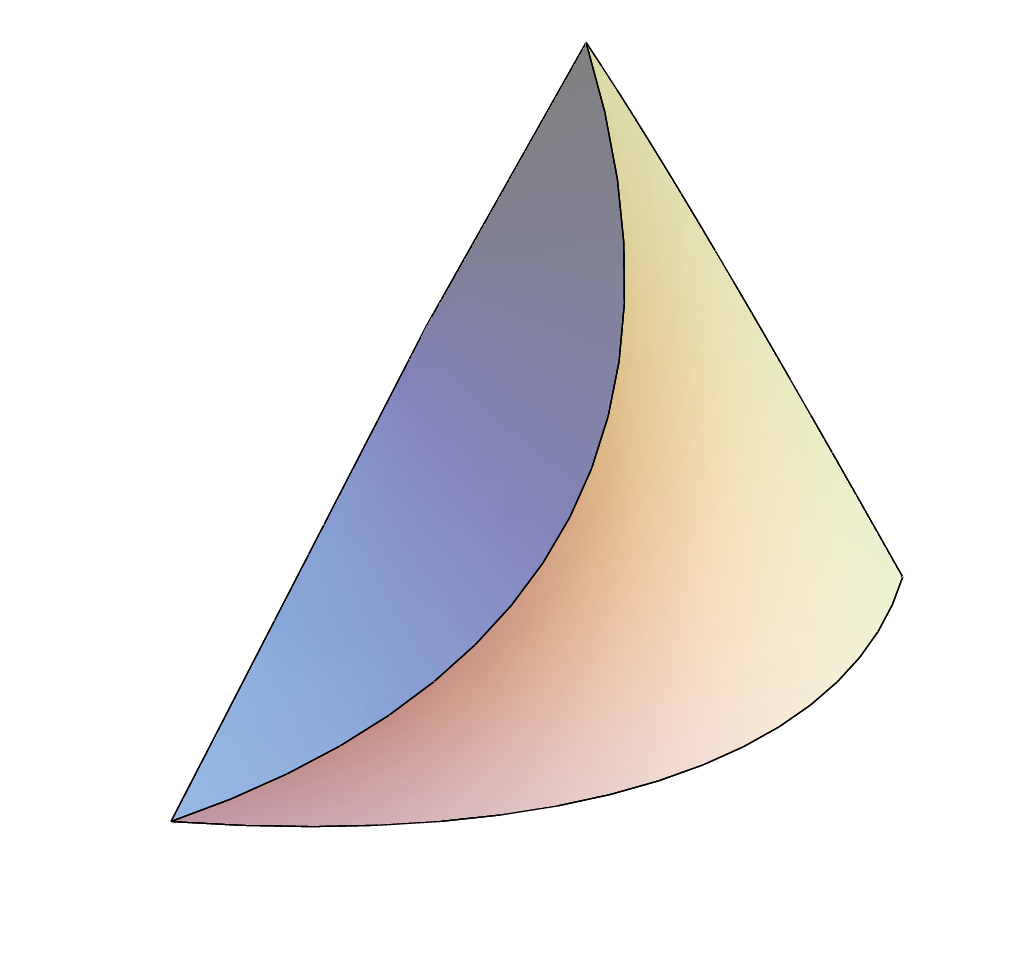}
    }
    \quad
    \label{fig:sphere}
    {
        \includegraphics[width=0.35\textwidth]{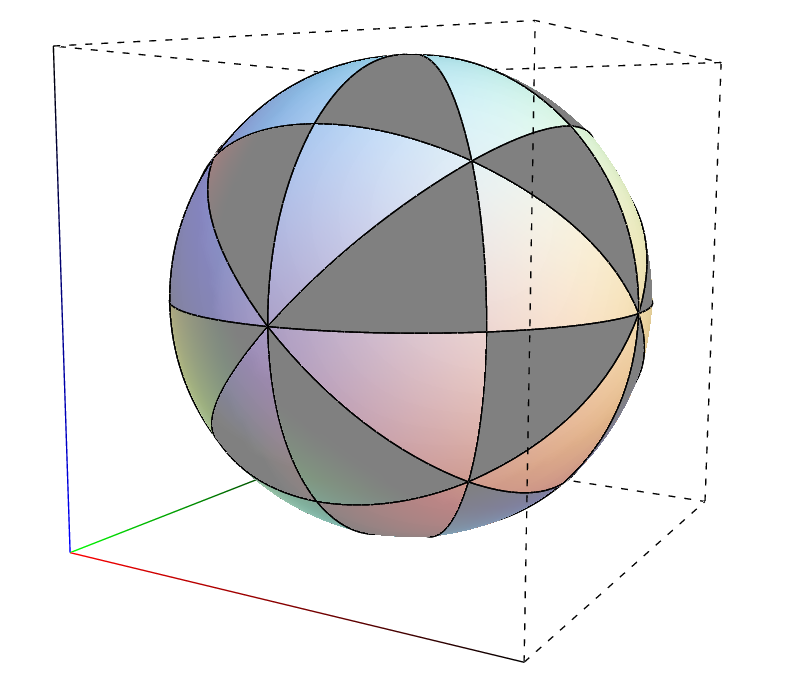}
    }
    \caption{Left: A curved tetrahedron $K$. Right: A sphere domain divided by curved tetrahedra $\mathcal{T}_h$.}
    \label{fig:isoparametric_mapping}
    \end{figure}

    Moreover, we denote the boundary of $D_h$ by $\Gamma_h$ and the inflow boundary by $\Gamma_h^{-}$, namely
    \begin{equation*}
      \Gamma_h = \partial D_h,\qquad\Gamma^{-}_h=\{y\in\Gamma_h:\Omega\cdot \boldsymbol{n}_h(y)<0\}.
    \end{equation*}
    Here $\boldsymbol{n}_h(y)$ is the unit outward normal vector of $\Gamma_h$ at point $y$.
    We also define the inflow (outflow) boundary for each element $K\in \mathcal{T}_h:\partial_{-(+)}K=\{x\in\partial K:\Omega\cdot \boldsymbol{n}_h(x)<(>)0\}$.

    Combining the assumptions $(H_1 \sim H_3)$ with the standard error estimate of Lagrange interpolation \cite{BRENNER2008}, we have
    \begin{equation}\label{eq2.1}
        ||F_h-F||_{W^{m,\infty}(\widehat{D})}\lesssim h^{k+1-m},\quad m=0,1.
    \end{equation}
    Denote the identity mapping as $id$ and the auxiliary mapping $\Phi_h = F_h\circ F^{-1}:D\rightarrow D_h$, respectively, there holds
    \begin{align}\label{eq2.2}
        ||id-\Phi_h||_{W^{m, \infty}(D)} &= ||(F-F_h)\circ F^{-1}||_{W^{m,\infty}(D)}\nonumber\\
        &\leq||F_h-F||_{W^{m,\infty}(\widehat{D})}||\mathbb{J}_F^{-1}||^m\nonumber\\
        &\lesssim h^{k+1-m},\quad m=0,1,
    \end{align}
    which implies the boundedness  of the Jacobian matrix and its determinant estimates
    \begin{equation}\label{eq2.3}
      \|\mathbb{J}_{\Phi_h(x)}\|_{W^{1, \infty}(D)}\lesssim 1,\quad ||\mathcal{J}_{\Phi_h(x)}||_{W^{1, \infty}(D)}\lesssim 1.
    \end{equation}

\subsection{Isoparametric finite element spaces of RTE}
\label{sec:2.2}
    The finite element space on the reference mesh $\widehat{\mathcal{T}}_h$ is defined as
    \begin{equation*}
        \widehat{V}_{h,k} = \bigl\{\widehat{v}_h \in L^2(\widehat{D}_h): \widehat{v}_h|_{\widehat{K}} \in P_k(\widehat{K}),\forall\widehat{K} \in \widehat{\mathcal{T}}_h \bigr\}.
    \end{equation*}
    The corresponding isoparametric finite element space on $\mathcal{T}_h$ is defined as \cite{BRENNER2008}
    \begin{equation*}
        V_{h,k} = \bigl\{ \widehat{v}_h \circ F_h^{-1} : \widehat{v}_h \in \widehat{V}_{h,k} \bigr\}.
    \end{equation*}
    Remarkably, the space $V_{h,k}$ does not have to be a space of polynomials, yet it remains finite-dimensional.

    For \( m \geq 0 \), we introduce the broken Sobolev space
    \begin{equation*}
    H^m(\mathcal{T}_h) = \{ v \in L^2(D_h) : v|_{K} \in H^m(K), \forall K \in \mathcal{T}_h \},
    \end{equation*}
    with the associated broken \( H^m \) norm and the broken \( L^\infty \) norm
    defined respectively as \cite{HESTHAVEN2008}
    \begin{equation*}
    \| v \|_{H^m(\mathcal{T}_h)} = \left( \sum\limits_{K \in \mathcal{T}_h} \| v \|_{H^m(K)}^2 \right)^{\frac{1}{2}}, \quad
    \| v \|_{L^\infty(\mathcal{T}_h)} = \max\limits_{K \in \mathcal{T}_h} \| v \|_{L^\infty(K)}.
    \end{equation*}
    More generally, for any \( p \in [1, \infty] \), the broken \( W^{m,p}(\mathcal{T}_h) \) space and its norms can be defined analogously.

    To characterize the discontinuous nature of the solution across elements, we distinguish between the interior and exterior traces at element boundaries. For each element $K \in \mathcal{T}_h$, a face $F \subset \partial K$, and a function $v \in H^1(\mathcal{T}_h)$, the interior and exterior traces on $F$ are denoted by $v^{+}$ and $v^{-}$, respectively. Furthermore, the jump of $v$ across $F$ is defined as $\llbracket v \rrbracket := v^{+} - v^{-}$ if $F$ is an inner face.

    Now we propose the isoparametric upwind DG formulation for (\ref{eq1.1}),
    which is defined as follows:
    Find $I_h\in V_{h,k}$ such that
    \begin{equation}\label{eq2.4}
    \mathcal{B}(I_h,v_h) = l(v_h),\quad \forall v_h\in V_{h,k},
    \end{equation}
    where the bilinear form $\mathcal{B}(\cdot, \cdot)$ with upwind flux and the linear functional $l(\cdot)$ are defined respectively as
    \begin{align}
    \mathcal{B}(I_h,v_h)=&
    \sum_{K\in\mathcal{T}_h}
    \left(
    \int_{K}(\Omega\cdot\nabla I_h+\sigma I_h)v_hdx \right)\nonumber
    -\sum_{K\in\mathcal{T}_h}\left(\int_{\partial_{-}K\cap\Gamma^{-}_h}(\Omega\cdot \boldsymbol{n_h})I_h^{+}v_h^{+}ds \right)\\
    &-\sum_{K\in\mathcal{T}_h}\left(\int_{\partial_{-}K \setminus \Gamma_h}(\Omega\cdot \boldsymbol{n_h}) \llbracket I_h \rrbracket v_h^{+}ds
    \right)\label{eq2.5},\\
    l(v_h) =& \sum_{K\in\mathcal{T}_h}
    \left(
    \int_{K}f_hv_hdx\right) -
    \sum_{K\in\mathcal{T}_h}
    \left(\int_{\partial_{-}K\cap\Gamma^{-}_h}(\Omega\cdot \boldsymbol{n_h})g_hv_h^{+}ds
    \right)\label{eq2.6}.
    \end{align}
    Here $f_h=f\circ\Phi_h^{-1}$, $g_h=\tilde{g}\circ\Phi_h^{-1}$ and $\tilde{g}$ is the zero extension of $g$ on $\Gamma$. We remark that $\mathcal{B}(\cdot, \cdot)$ is the extension of the bilinear form defined for convex polyhedra in
    the seminal paper \cite{HOUSTON2002}, now allowing $F$ to be a curved face.

    Employing  integration by parts to (\ref{eq2.5}) yields an alternative expression for the bilinear form
    \begin{align}    \label{eq2.7}
    \mathcal{B}(I_h,v_h) = &\sum_{K\in\mathcal{T}_h}
    \left(
    \int_{K}(\sigma v_h-\Omega\cdot\nabla v_h)I_hdx \right)
    +\sum_{K\in\mathcal{T}_h}
    \left(\int_{\partial_{+}K\cap\Gamma_h}(\Omega\cdot \boldsymbol{n_h})I_h^{+}v_h^{+}ds\right) \nonumber\\
    &+ \sum_{K\in\mathcal{T}_h}
    \left(\int_{\partial_{-}K \setminus \Gamma_h}(\Omega\cdot \boldsymbol{n_h}) \llbracket v_h \rrbracket I_h^{-}ds
    \right).
    \end{align}

    The DG norm $||\cdot||_{DG}$ in this paper is defined as follows
    \begin{align}
    ||v_h||_{DG}^2 = \mathcal{B}(v_h,v_h)
    = &\sum_{K\in\mathcal{T}_h}
    \left(
    \int_{K}\sigma v_h^2dx\right)
    + \frac{1}{2}\sum_{K\in\mathcal{T}_h}
    \left(\int_{\Gamma_h\cap\partial K}|\Omega\cdot \boldsymbol{n_h}| v_h^2ds \right)\nonumber\\
    &- \frac{1}{2}\sum_{K\in\mathcal{T}_h}
    \left(\int_{\partial_{-}K \setminus \Gamma_h}\Omega\cdot \boldsymbol{n_h}\llbracket v_h\rrbracket^2ds
    \right).\nonumber
    \end{align}

    \begin{remark}
        The upwind DG numerical scheme in \cite{HOUSTON2000} for the RTE on polygonal or polyhedral domains is consistent. The classical approach therefore estimates
        \begin{align*}
          \|I - I_h\|_{DG} \leq
          \|I - \widehat{P}I\|_{DG} + \|\widehat{P}I - I_h\|_{DG},
        \end{align*}
        where $\widehat{P}:L^2(\widehat{\mathcal{T}}_h)\rightarrow \widehat{V}_{h,k}$ is the local $L^2$ projection operator and the second term is controlled by $\|I - \widehat{P}I\|_{DG}$ \cite{JOHNSON1986, BREZZI2004, PAZNER2021}.
        However, for the numerical scheme in (\ref{eq2.6}), we introduce the concepts of extension and auxiliary mapping, which makes the scheme inconsistent and thus introduces extra difficulties for error estiamte.
    \end{remark}

    Inspired by the stability analysis in Lemma 2.2 of \cite{HOUSTON2000} and Lemma 3.1 of \cite{HOUSTON2002}, we extend it to the isoparametric case.
    \begin{lemma}\label{lemma2.1}
    Assume that there exist two positive constants $m$ and $M$ such that $0<m\leq\sigma\leq M$. Then the numerical solution $I_h$ of (\ref{eq2.4}) obeys the bound
    \begin{align}
    \sum_{K\in\mathcal{T}_h}
    &\left(
    \int_{K}\sigma I_h^2dx
    +\int_{\partial_{-}K\cap\Gamma^{-}_h}|\Omega\cdot \boldsymbol{n_h}| I_h^2ds
    +\int_{\partial_{-}K \setminus \Gamma_h}|\Omega\cdot \boldsymbol{n_h}| \llbracket I_h\rrbracket^2ds\right)\nonumber\\
    +&\sum_{K\in\mathcal{T}_h}\left(\int_{\partial_{+}K\cap\Gamma_h}\Omega\cdot \boldsymbol{n_h} I_h^2ds
    \right)
    \lesssim\sum_{K\in\mathcal{T}_h}\left(
    \int_{K}f_h^2dx+\int_{\partial_{-}K\cap\Gamma^{-}}g_h^2ds
    \right).\label{eq2.8}
    \end{align}
    \end{lemma}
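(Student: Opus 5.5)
Take $v_h=I_h$ in (\ref{eq2.4}) and compute $\mathcal{B}(I_h,I_h)$ exactly, via the elementwise identity
\[
\int_K(\Omega\cdot\nabla I_h)I_h\,dx=\tfrac12\int_{\partial K}(\Omega\cdot\boldsymbol{n_h})(I_h^{+})^2\,ds .
\]
This is just the divergence theorem on the curved element $K=F_h(\widehat K)$. It is valid because $I_h|_K=\widehat v_h\circ F_h^{-1}$ is smooth on $K$, $F_h$ is a $C^1$ diffeomorphism on each $\widehat K$ for $h$ small by (\ref{eq2.1}), and $\Omega$ is constant. This is the only place the isoparametric setting enters, and no polynomial structure of $\Omega\cdot\nabla I_h$ is needed.

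Split each $\partial K$ into the following parts:
\begin{itemize}
\item the inflow boundary face $\partial_-K\cap\Gamma_h^-$;
\item the outflow boundary face $\partial_+K\cap\Gamma_h$;
\item interior inflow and outflow faces.
\end{itemize}
Each interior face $e$ is the outflow face of one element $K'$ and the inflow face of its neighbour $K$. The continuity of $F_h$ across faces gives matching geometry, so $\boldsymbol{n_h}$ is single valued up to sign on $e$. Collecting the two contributions on $e$ with the jump term of (\ref{eq2.5}) gives
\[
-\tfrac12(\Omega\cdot\boldsymbol{n_h})\bigl((I_h^+)^2-(I_h^-)^2\bigr)+(\Omega\cdot\boldsymbol{n_h})(I_h^+-I_h^-)I_h^+\cdot(-1)\cdots,
\]
and the standard algebra $a^2-b^2-2(a-b)a=-(a-b)^2$ reduces this to $\tfrac12|\Omega\cdot\boldsymbol{n_h}|\llbracket I_h\rrbracket^2$. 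On the inflow boundary the two terms combine to $\tfrac12|\Omega\cdot\boldsymbol{n_h}|I_h^2$. On the outflow boundary one gets $\tfrac12\,\Omega\cdot\boldsymbol{n_h}\,I_h^2$. Therefore
\[
\mathcal{B}(I_h,I_h)=\sum_K\Bigl(\int_K\sigma I_h^2+\tfrac12\int_{\partial_-K\cap\Gamma_h^-}|\Omega\cdot\boldsymbol{n_h}|I_h^2+\tfrac12\int_{\partial_-K\setminus\Gamma_h}|\Omega\cdot\boldsymbol{n_h}|\llbracket I_h\rrbracket^2+\tfrac12\int_{\partial_+K\cap\Gamma_h}\Omega\cdot\boldsymbol{n_h}\,I_h^2\Bigr),
\]
which is the left side of (\ref{eq2.8}) up to the factor $1/2$.

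Next bound $l(I_h)$.
\begin{itemize}
\item For the volume term, Cauchy–Schwarz and Young with $\sigma\ge m$ give $\int f_hI_h\le \frac{1}{2m}\|f_h\|^2+\frac12\int\sigma I_h^2$.
\item For the boundary term, $|\Omega\cdot\boldsymbol{n_h}|\le1$ and Young give $\int|\Omega\cdot\boldsymbol{n_h}|\,|g_h|\,|I_h|\le \int|\Omega\cdot\boldsymbol{n_h}|g_h^2+\frac14\int|\Omega\cdot\boldsymbol{n_h}|I_h^2$.
\end{itemize}
Absorb the $I_h$ pieces into the left side; the constants depend only on $m$.

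The main obstacle is the boundary datum. The functional $l$ integrates $g_h$ over $\Gamma_h^-$, whereas the right side of (\ref{eq2.8}) is written over $\partial_-K\cap\Gamma^-$, i.e.\ in terms of the original inflow boundary. I would interpret it through the auxiliary map: $g_h=\tilde g\circ\Phi_h^{-1}$ vanishes off $\Phi_h(\Gamma^-)$, so the integral over $\Gamma_h^-$ reduces to $\Gamma_h^-\cap\Phi_h(\Gamma^-)$. Changing variables with $\Phi_h$, whose surface Jacobian is bounded by (\ref{eq2.3}), bounds $\int_{\Gamma_h^-}g_h^2$ by a multiple of $\int_{\Gamma^-}g^2$, equivalently of $\int g_h^2$ over $\Phi_h(\Gamma^-)$. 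I would also check that $|\Omega\cdot\boldsymbol{n_h}|\le1$ suffices, so that no sign information on the mismatch region between $\Gamma_h^-$ and $\Phi_h(\Gamma^-)$ is required. With that, the estimate follows with a constant independent of $h$.
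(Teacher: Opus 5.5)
Your argument is correct and is the standard energy argument that the paper relies on; the paper gives no separate proof and only cites Lemma 2.2 of Houston--Schwab--S\"uli. The steps are the same: test with $v_h=I_h$, integrate $\int_K(\Omega\cdot\nabla I_h)I_h\,dx$ by parts on each curved element, recombine the face terms, and absorb the $I_h$ contributions from $l(I_h)$ via Young's inequality. Two minor points: in your interior-face display the first term should carry $+\tfrac12$ when $\boldsymbol{n_h}$ is the outward normal of the downwind element, after which your identity $a^2-b^2-2(a-b)a=-(a-b)^2$ gives $\tfrac12|\Omega\cdot\boldsymbol{n_h}|\llbracket I_h\rrbracket^2$; and reading the boundary term on the right of (\ref{eq2.8}) over $\Gamma_h^-$ (equivalently over $\Gamma^-$ via $\Phi_h$) is indeed necessary, since $\partial_-K\cap\Gamma^-$ as written is generically a set of zero surface measure.
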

    \begin{remark}
      The Lemma \ref{lemma2.1} implies the uniqueness of the solution of the isoparametric upwind DG method (\ref{eq2.4}). Moreover, the existence of the solution $I_h$ comes from its uniqueness because (\ref{eq2.4}) is a linear problem over the finite-dimensional space $V_{h,k}$.
    \end{remark}

\section{Error analysis of the isoparametric finite element method}
\label{sec:3}
    Now we will present our main contribution, namely the rigorous error estimate of the isoparametric
    upwind DG method \eqref{eq2.4}. This section will analyze the error under the DG norm between the extension of the true solution $I$ in (\ref{eq1.1}) and $I_h$ of (\ref{eq2.4}) on $\mathcal{T}_h$.

    Inspired by the work in \cite{AYLWIN2023, CIARLET1978}, we extend the true solution $I$ of \eqref{eq1.1} to a function $\tilde I$ on the hold-all domain $D_H = \operatorname{convex}(D \cup D_h)$ such that $\tilde I \in H^{k+1}(\mathcal{T}_h)$ and $\tilde I = I$ on $D$. For $k\ge2$ and $d=2,3$ we have $k+1>d/2$, thus for each element $K\in\mathcal{T}_h$, the Sobolev embedding theorem implies $\tilde I|_K \in C^0(\overline{K})$ \cite{Adams2003}. Hence $\tilde I$ is bounded on every $K$ and consequently $\|\tilde I\|_{L^\infty(\Gamma_h)}\le C$. For convenience we also set $\tilde f = \Omega\cdot\nabla\tilde I + \sigma\tilde I$.
\subsection{Approximation error in the isoparametric finite element space}
\label{Section3.1}

    Given the finite element space $\widehat{V}_{h,k}$ and the local $L^2$ projection operator $\widehat{P}:L^2(\widehat{\mathcal{T}}_h)\rightarrow \widehat{V}_{h,k}$ \cite{BRENNER2008}, we define a corresponding isoparametric auxiliary operator $\Lambda:H^{k+1}(\mathcal{T}_h)\rightarrow V_{h,k}$. It is defined by
    \begin{equation}\label{eq3.0}
        \Lambda v :=(\widehat{P}\hat{v})\circ F_h^{-1}
        =[\widehat{P}(v\circ F_h)]\circ F_h^{-1}.
    \end{equation}
    This definition follows the standard isoparametric construction: the pull-back $\hat{v} = v \circ F_h$ is firstly projected onto the straight element $\widehat{K}$ by $\widehat{P}$, and the result is then pushed forward via $F_h^{-1}$ to the curved element $K_h$, yielding a function in the isoparametric finite element space $V_{h,k}$.

    \begin{lemma}\label{lemma3.1}
    For $\tilde{I}\in H^{k+1}(\mathcal{T}_h)$ and $\Lambda: L^{2}(\mathcal{T}_h)\rightarrow V_{h,k}$ defined above,
    there holds
    \begin{equation}
        ||\tilde{I} - \Lambda \tilde{I}||_{DG}\lesssim h^{k+1/2}||\tilde{I}||_{H^{k+1}(\mathcal{T}_h)}.\label{eq3.1}
    \end{equation}
    \end{lemma}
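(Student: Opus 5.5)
We bound $\eta:=\tilde I-\Lambda\tilde I$ directly in the DG norm, which is a sum of three terms: the volume term $\int_K\sigma\eta^2$, the boundary term on $\Gamma_h\cap\partial K$ weighted by $|\Omega\cdot\boldsymbol n_h|$, and the interior jump terms on $\partial_-K\setminus\Gamma_h$ weighted by $|\Omega\cdot\boldsymbol n_h|$. Since $\sigma\le M$ and $|\Omega\cdot\boldsymbol n_h|\le 1$ (as $\Omega$ is a unit vector and $\boldsymbol n_h$ is a unit normal), we get
\begin{equation*}
\|\eta\|_{DG}^2\lesssim \sum_{K\in\mathcal T_h}\Bigl(\|\eta\|_{L^2(K)}^2+\|\eta\|_{L^2(\partial K)}^2\Bigr),
\end{equation*}
using $\llbracket\eta\rrbracket^2\le 2((\eta^+)^2+(\eta^-)^2)$ and the fact that each face is shared by at most two elements. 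It therefore suffices to prove $\|\eta\|_{L^2(K)}\lesssim h^{k+1}|\tilde I|_{H^{k+1}(K)}$ and $\|\eta\|_{L^2(\partial K)}\lesssim h^{k+1/2}\|\tilde I\|_{H^{k+1}(K)}$ elementwise, and then sum over $K$.

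\textbf{Pulling back to $\widehat K$.} By definition \eqref{eq3.0}, $\eta\circ F_h=\hat v-\widehat P\hat v$ with $\hat v=\tilde I\circ F_h$. Changing variables gives $\|\eta\|_{L^2(K)}^2=\int_{\widehat K}|\hat v-\widehat P\hat v|^2\,|\det\mathbb J_{F_h}|\,d\hat x$. For $\partial K$ the surface Jacobian of $F_h$ restricted to a face appears instead. By \eqref{eq2.1} together with $(H_2)$–$(H_3)$, $\mathbb J_{F_h}=\mathbb J_F+\mathcal O(h^k)$ is uniformly close to the identity. Hence both the volume and surface Jacobians are bounded above and below uniformly in $h$, and
\begin{equation*}
\|\eta\|_{L^2(K)}\simeq\|\hat v-\widehat P\hat v\|_{L^2(\widehat K)},\qquad \|\eta\|_{L^2(\partial K)}\simeq\|\hat v-\widehat P\hat v\|_{L^2(\partial\widehat K)}.
\end{equation*}

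\textbf{Estimates on the straight element.} On $\widehat K$ we use the standard local $L^2$-projection estimate $\|\hat v-\widehat P\hat v\|_{L^2(\widehat K)}\lesssim h^{k+1}|\hat v|_{H^{k+1}(\widehat K)}$. For the face term we use the scaled trace inequality
\begin{equation*}
\|w\|_{L^2(\partial\widehat K)}^2\lesssim h^{-1}\|w\|_{L^2(\widehat K)}^2+h|w|_{H^1(\widehat K)}^2,
\end{equation*}
applied to $w=\hat v-\widehat P\hat v$, combined with the $H^1$ projection bound $|w|_{H^1(\widehat K)}\lesssim h^k|\hat v|_{H^{k+1}(\widehat K)}$. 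Together these give $\|w\|_{L^2(\partial\widehat K)}\lesssim h^{k+1/2}|\hat v|_{H^{k+1}(\widehat K)}$.

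\textbf{Main obstacle: transferring the seminorm back to $K$.} The remaining step is $|\hat v|_{H^{k+1}(\widehat K)}\lesssim\|\tilde I\|_{H^{k+1}(K)}$. Because $F_h$ is non-affine, the chain rule (Faà di Bruno) expresses $D^{k+1}(\tilde I\circ F_h)$ as a sum of terms $D^j\tilde I$, $1\le j\le k+1$, multiplied by products of derivatives of $F_h$ of total order $k+1$. This is why the full $H^{k+1}$ norm, rather than the seminorm, appears on the right-hand side of \eqref{eq3.1}. To make this work, I will show $\|F_h\|_{W^{k+1,\infty}(\widehat K)}\lesssim 1$ uniformly:
\begin{itemize}
\item the derivatives of order $2,\dots,k$ follow from $(H_3)$ and the stability of Lagrange interpolation, $|F_h|_{W^{j,\infty}}\lesssim\|F\|_{W^{j,\infty}}$;
\item the derivative of order $k+1$ vanishes, since $F_h|_{\widehat K}\in\mathbb P_k$.
\end{itemize}
With this uniform bound, the chain-rule terms are controlled by $\|\tilde I\|_{H^{k+1}(K)}$ after changing variables back to $K$, using the bounded Jacobians from the pull-back step. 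Summing the elementwise bounds over $K\in\mathcal T_h$ then yields \eqref{eq3.1}.
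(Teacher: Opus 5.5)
Your proposal is correct and follows essentially the same route as the paper: pull $\tilde I-\Lambda\tilde I$ back to the straight elements via $F_h$, use the uniform bounds on the volume and surface Jacobians, apply the local $L^2$-projection estimate and a trace inequality on $\widehat K$, and transfer $\|\tilde I\circ F_h\|_{H^{k+1}(\widehat{\mathcal T}_h)}$ back to $\|\tilde I\|_{H^{k+1}(\mathcal T_h)}$. Two of your steps are spelled out where the paper only cites \eqref{eq2.2}, $(H_3)$ and ``the trace theorem'': the chain-rule argument with $\|F_h\|_{W^{k+1,\infty}(\widehat K)}\lesssim 1$ (using $D^{k+1}F_h=0$), and the scaled trace inequality $h^{-1}\|w\|_{L^2}^2+h|w|_{H^1}^2$.
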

    \begin{proof}\quad
        Based on the definition of $\Lambda$, together with the uniform boundedness of the Jacobian determinant $\mathcal{J}_{F_h}$ and its reciprocal (cf. Assumption $H_2$), we will obtain
        \begin{align}\label{eq3.2}
            ||\tilde{I} - \Lambda\tilde{I}||_{L^2(\mathcal{T}_h)}
            &=||\tilde{I} - [\widehat{P}(\tilde{I}\circ F_h)]\circ F_h^{-1}||_{L^2(\mathcal{T}_h)}\nonumber\\
            &=||\mathcal{J}_{F_h}^{1/2}(\tilde{I}\circ F_h - \widehat{P}(\tilde{I}\circ F_h))||_{L^2(\mathcal{\widehat{T}}_h)}\nonumber\\
            &=||\mathcal{J}_{F_h}^{1/2}(\hat{\tilde{I}} - \widehat{P}\hat{\tilde{I}})||_{L^2(\mathcal{\widehat{T}}_h)}
            \lesssim ||\hat{\tilde{I}} - \widehat{P}\hat{\tilde{I}}||_{L^2(\mathcal{\widehat{T}}_h)}\nonumber\\
            &\lesssim h^{k+1}||\hat{\tilde{I}}||_{H^{k+1}(\widehat{\mathcal{T}}_h)}
            \lesssim h^{k+1}||\tilde{I}||_{H^{k+1}(\mathcal{T}_h)}.
        \end{align}
        The penultimate inequality is a consequence of the approximation property of the local $L^2$ projection $\widehat{P}$ on $\widehat{V}_{h,k}$ \cite{BRENNER2008}, whereas the last inequality results from Eq. (\ref{eq2.2}) and the assumption $H_3$.

        Let $\mathcal{E}_h = \big\{\bigcup{\partial K:K\in\mathcal{T}_h}\big\}$ and $\widehat{\mathcal{E}}_h = \big\{\bigcup{\partial \widehat{K}:\widehat{K}\in\widehat{\mathcal{T}}_h}\big\}$ denote the unions of all element boundaries on $\mathcal{T}_h$ and $\widehat{\mathcal{T}}_h$, respectively. Similar to (\ref{eq3.2}) and invoking the trace theorem \cite{BRENNER2008}, we arrive at
        \begin{align}\label{eq3.3}
            &|||\Omega\cdot \boldsymbol{n}_h|^{1/2}\llbracket\tilde{I} - \Lambda\tilde{I}\rrbracket||_{L^2(\mathcal{E}_h)}\nonumber\\
            &=|||\Omega\cdot \boldsymbol{n}_h|^{1/2}\llbracket \tilde{I} - (\widehat{P}\hat{\tilde{I}})\circ F_h^{-1} \rrbracket||_{L^2(\mathcal{E}_h)}\nonumber\\
            &=|||\Omega\cdot\mathbb{B}_h \hat{\boldsymbol{n}}|^{1/2}\llbracket \hat{\tilde{I}} - \widehat{P}\hat{\tilde{I}} \rrbracket||_{L^2(\widehat{\mathcal{E}}_h)}\nonumber\\
            &\lesssim ||\llbracket \hat{\tilde{I}} - \widehat{P}\hat{\tilde{I}} \rrbracket||_{L^2(\widehat{\mathcal{E}}_h)}
            \lesssim h^{-1/2} ||\hat{\tilde{I}} - \widehat{P}\hat{\tilde{I}}||_{H^{k+1}(\widehat{\mathcal{T}}_h)}\nonumber\\
            &\lesssim h^{k+1/2}||\hat{\tilde{I}}||_{H^{k+1}(\widehat{\mathcal{T}}_h)}
            \lesssim h^{k+1/2}||\tilde{I}||_{H^{k+1}(\mathcal{T}_h)},
        \end{align}
        where $\mathbb{B}_h=||\mathbb{J}_{F_h}^{T}\boldsymbol{n}_h||\mathcal{J}_{F_h}\mathbb{J}_{F_h}^{-T}$ denotes the scaled Jacobian matrix, and $\hat{\boldsymbol{n}} = \mathbb{J}_{F_h}^{T}\boldsymbol{n}_h / ||\mathbb{J}_{F_h}^{T}\boldsymbol{n}_h||$ is the unit normal vector on $\widehat{\mathcal{T}}_h$ \cite{LI2025}.

        Combining (\ref{eq3.2}), (\ref{eq3.3}) and the definition of the DG norm yields the estimates (\ref{eq3.1}). The proof of Lemma \ref{lemma3.1} is completed.
    \end{proof}
    To control the projection error $\|\widehat{P}I - I_h\|_{DG}$, the analysis for the RTE on polygonal or polyhedral domains relies on the property that DG continuity of the bilinear form for the projection error, which necessitates the discrete convection term $\Omega\cdot\nabla \hat{v}_h$ belongs to $\widehat{V}_{h,k}$.
    However, for isoparametric elements we cannot guarantee that $\Omega\cdot\nabla v_h$ belongs to $\in V_{h,k}$.
    Additional techniques are required to address this issue.

    Next we prove a key lemma for our error estimate, which states that the bilinear form is continuous with respect
    to the isoparametric projection error.
    \begin{lemma}\label{lemma3.2}
    Let $\tilde{I}$ be the extension solution of (\ref{eq1.1}) and let $\Lambda:L^2(\mathcal{T}_h)\rightarrow V_{h,k}$ denote the isoparametric auxiliary operator. For any $v_h \in V_{h,k}$, with the bilinear form $\mathcal{B}(\cdot,\cdot)$ defined in (\ref{eq2.5}), there holds

    \begin{equation}\label{eq3.4}
    \mathcal{B}(\tilde{I} - \Lambda\tilde{I}, v_h) \lesssim
    \|\tilde{I} - \Lambda\tilde{I}\|_{\mathrm{DG}} \,
    \|v_h\|_{\mathrm{DG}} .
    \end{equation}
    \end{lemma}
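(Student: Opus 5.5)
Set $\eta:=\tilde I-\Lambda\tilde I$. The plan is to work with the integrated-by-parts form (\ref{eq2.7}) of $\mathcal{B}$, with $\eta$ in the first argument:
\begin{align*}
\mathcal{B}(\eta,v_h)=\sum_{K}\int_K(\sigma v_h-\Omega\cdot\nabla v_h)\eta\,dx+\sum_K\int_{\partial_+K\cap\Gamma_h}(\Omega\cdot\boldsymbol{n}_h)\eta^+v_h^+ds+\sum_K\int_{\partial_-K\setminus\Gamma_h}(\Omega\cdot\boldsymbol{n}_h)\llbracket v_h\rrbracket\eta^-ds .
\end{align*}
The reaction term is bounded by Cauchy--Schwarz and $\sigma\le M$. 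For the two face terms we split the weight $|\Omega\cdot\boldsymbol{n}_h|$ as $|\Omega\cdot\boldsymbol{n}_h|^{1/2}\cdot|\Omega\cdot\boldsymbol{n}_h|^{1/2}$. The factors carrying $v_h$ and $\llbracket v_h\rrbracket$ are then exactly components of $\|v_h\|_{DG}$. The factors carrying $\eta^\pm$ are controlled by the weighted face trace of $\eta$, which the proof of Lemma \ref{lemma3.1}, estimate (\ref{eq3.3}), bounds by $h^{k+1/2}\|\tilde I\|_{H^{k+1}(\mathcal{T}_h)}$. Since the lemma is stated with $\|\eta\|_{DG}$ on the right, I would interpret the estimate as an augmented DG norm that also contains these trace terms, which is standard in \cite{HOUSTON2000}. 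In practice the quantity actually used afterwards is $h^{k+1/2}\|\tilde I\|_{H^{k+1}}$.

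The main obstacle is the convection term $\int_K(\Omega\cdot\nabla v_h)\eta\,dx$. On affine meshes this term vanishes: $\Omega\cdot\nabla v_h$ is a polynomial of degree $k-1$ and $\eta$ is $L^2$-orthogonal to such polynomials. Here the plan is to pull back to $\widehat K$. Write $\hat v_h=v_h\circ F_h\in P_k(\widehat K)$, so that
\[
\int_K(\Omega\cdot\nabla v_h)\eta\,dx=\int_{\widehat K}\big(\mathbb{J}_{F_h}^{-1}\Omega\big)\cdot\widehat\nabla\hat v_h\,(\hat{\tilde I}-\widehat P\hat{\tilde I})\,\mathcal J_{F_h}\,d\hat x .
\]
Set $w:=\mathcal J_{F_h}\,\mathbb{J}_{F_h}^{-1}\Omega$; note that $\mathcal J_{F_h}\mathbb{J}_{F_h}^{-1}$ is the cofactor matrix, whose entries are polynomials. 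Let $\bar w$ be the value of $w$ at the barycenter of $\widehat K$ (a constant vector). Then $\bar w\cdot\widehat\nabla\hat v_h\in P_{k-1}(\widehat K)$, so its integral against $\hat{\tilde I}-\widehat P\hat{\tilde I}$ vanishes. What remains is
\[
\int_{\widehat K}(w-\bar w)\cdot\widehat\nabla\hat v_h\,(\hat{\tilde I}-\widehat P\hat{\tilde I})\,d\hat x .
\]
By (\ref{eq2.1}) and $(H_3)$, $\|w-\bar w\|_{L^\infty(\widehat K)}\lesssim h\,|w|_{W^{1,\infty}}\lesssim h$. Indeed, $F_h$ is close to $F$, $F$ is smooth with bounded derivatives, and on interior elements $F_h$ is affine, so $w$ is constant there.

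An inverse inequality then gives $\|\widehat\nabla\hat v_h\|_{L^2(\widehat K)}\lesssim h^{-1}\|\hat v_h\|_{L^2(\widehat K)}$. Hence the remaining term is bounded by $\|\hat v_h\|_{L^2}\|\hat{\tilde I}-\widehat P\hat{\tilde I}\|_{L^2}$. Mapping back with the bounded Jacobians and using $\sigma\ge m>0$, this is $\lesssim\|v_h\|_{DG}\,\|\eta\|_{L^2(\mathcal{T}_h)}\lesssim\|v_h\|_{DG}\|\eta\|_{DG}$. The delicate point is the $W^{1,\infty}$ bound on the cofactor field $w$ on curved boundary elements. I would get it from $\|F_h-F\|_{W^{1,\infty}}\lesssim h^k$ together with the boundedness of $F$ in $W^{2,\infty}$, using the inverse estimate on polynomials for the second derivatives of $F_h$.

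Summing over $K$ and applying the discrete Cauchy--Schwarz inequality to all contributions yields (\ref{eq3.4}).
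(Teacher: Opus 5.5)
Your proof is correct and has the same skeleton as the paper's. Both use the integrated-by-parts form (\ref{eq2.7}) and Cauchy--Schwarz on the reaction and face terms. For the convection term, both pull back to $\widehat K$ and freeze the geometric coefficient at a constant, so that the $L^2$-orthogonality of $\widehat P$ kills the frozen part. The $O(h)$ remainder is then absorbed by the inverse inequality.

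\textbf{Convection term.} You differ only in \emph{where} you freeze. The paper freezes at the identity and splits into $R_1$ (with $\mathbb{J}_{F_h}^{-T}-\mathbb{I}$), $R_2$ (with $1-\mathcal{J}_{F_h}$) and $R_3=0$. It gets the $O(h)$ size of both perturbations from $(H_2)$ and (\ref{eq2.1}). This is short, but it relies on $\mathbb{J}_F$ being globally $O(h)$-close to $\mathbb{I}$, which is special to the setting where $\widehat K$ is a physical straight element.

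You freeze the adjugate field $w=\mathcal{J}_{F_h}\mathbb{J}_{F_h}^{-1}\Omega$ at the barycenter. That needs only $|F_h|_{W^{2,\infty}(\widehat K)}\lesssim 1$, the usual nearly-affine property of isoparametric elements, so your version would survive without $(H_2)$. For that bound, do not apply an inverse estimate to $F_h-F$, since $F$ is not a polynomial. Instead use $|F-F_h|_{W^{2,\infty}(\widehat K)}\lesssim h^{k-1}|F|_{W^{k+1,\infty}(\widehat K)}$, or first insert a local Taylor polynomial of $F$.

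\textbf{Face terms.} This is the more substantive difference, and here you are right while the paper's proof has a gap. The paper asserts $L_1+L_3+L_4\lesssim\|\eta\|_{DG}\|v_h\|_{DG}$ ``directly'' from the definition of the DG norm, with $\eta=\tilde I-\Lambda\tilde I$. That is fine for $L_1$ and $L_3$: the DG norm contains the weighted trace on all of $\Gamma_h$, so $L_3$ needs no augmentation. But $L_4$ involves the one-sided trace $\eta^-$ on interior faces, whereas $\|\cdot\|_{DG}$ only sees $\llbracket\eta\rrbracket$.

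With the bare DG norm the inequality in fact fails uniformly. Take the one-dimensional analogue on a uniform affine mesh with $\Omega=\sigma=1$, $k$ odd and $\tilde I=x^{k+1}$.
\begin{itemize}
\item On every cell the projection error is $c_kh^{k+1}P_{k+1}(\xi)$ with $c_k\neq0$. Since $P_{k+1}(\pm1)=1$ for $k$ odd, $\eta$ is continuous at the nodes, so $\|\eta\|_{DG}=O(h^{k+1})$.
\item Take $v_h=\xi$, the local coordinate in $[-1,1]$, on every cell. Then $\|v_h\|_{DG}\sim h^{-1/2}$.
\item The volume terms vanish, while $\mathcal{B}(\eta,v_h)\approx 2c_kh^{k}$.
\end{itemize}
So the ratio $\mathcal{B}(\eta,v_h)/(\|\eta\|_{DG}\|v_h\|_{DG})$ grows like $h^{-1/2}$.

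Hence your augmented norm, with $\||\Omega\cdot\boldsymbol{n}_h|^{1/2}\eta^-\|_{L^2}$ over interior faces added, is necessary rather than optional. It is also all that Theorem \ref{theorem3.1} uses, since this extra term is $O(h^{k+1/2}\|\tilde I\|_{H^{k+1}(\mathcal{T}_h)})$.

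One correction: (\ref{eq3.3}) as stated bounds the jump, not the trace. What you need is its proof, i.e.\ the elementwise trace inequality applied to $\hat{\tilde I}-\widehat P\hat{\tilde I}$, which bounds each one-sided trace by the same quantity.
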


    \begin{proof}\quad
    Utilizing the equivalent form of $\mathcal{B}(\cdot, \cdot)$ in (\ref{eq2.7}), we can decompose it into four parts
    \begin{align}\label{eq3.5}
    \mathcal{B}(\tilde{I} - \Lambda\tilde{I},v_h)
    =& \sum_{K\in\mathcal{T}_h}
    \int_{K}\sigma\, v_h(\tilde{I} - \Lambda\tilde{I})dx
    - \sum_{K\in\mathcal{T}_h}
    \int_{K}\Omega\cdot\nabla v_h(\tilde{I} - \Lambda\tilde{I})dx \nonumber\\
    &+ \sum_{K\in\mathcal{T}_h}\int_{\partial_{+}K\cap\Gamma_h}
    (\Omega\cdot \boldsymbol{n_h})(\tilde{I} - \Lambda\tilde{I})^{+}v_h^{+}ds \nonumber\\
    &+ \sum_{K\in\mathcal{T}_h}\int_{\partial_{-}K \setminus \Gamma_h}
    (\Omega\cdot \boldsymbol{n_h})\llbracket v_h \rrbracket (\tilde{I} - \Lambda\tilde{I})^{-}\,ds \nonumber\\
    =&: L_1 + L_2 + L_3 + L_4 .
    \end{align}

    From the definition of the DG norm, together with the Cauchy-Schwarz inequality and the boundedness of $\sigma$, it follows directly that
    \begin{equation*}
    L_1 + L_3 + L_4 \lesssim
    \|\tilde{I} - \Lambda\tilde{I}\|_{\mathrm{DG}}
    \|v_h\|_{\mathrm{DG}} .
    \end{equation*}

    Note that due to the non-affine transformation,
    the term $\int_{K}\Omega\cdot\nabla v_h(\tilde{I} - \Lambda\tilde{I})dx$ in $L_2$
    will generally not vanish.
    However for straight meshes and affine DG method, this is not the case.
    For $L_2$, applying the integral transformation $F_h$ yields
    \begin{align}\label{eq3.6}
        L_2 = & -\sum_{K\in\mathcal{T}_h}\int_{K}
        \bigl(\Omega\cdot\nabla v_h\bigr)
        \bigl[\tilde{I} - (\widehat{P}\hat{\tilde{I}})\circ F_h^{-1}\bigr]dx \nonumber\\
        = &-\sum_{\widehat{K}\in\widehat{\mathcal{T}}_h}\int_{\widehat{K}}
        \bigl(\Omega\cdot\mathbb{J}_{F_h}^{-T}\nabla \hat{v}_h\bigr)
        (\hat{\tilde{I}} - \widehat{P} \hat{\tilde{I}})\,\mathcal{J}_{F_h}d\hat{x} \nonumber\\
        = &-\sum_{\widehat{K}\in\widehat{\mathcal{T}}_h}\int_{\widehat{K}}
        \Bigl[\Omega\cdot\bigl(\mathbb{J}_{F_h}^{-T}-\mathbb{I}\bigr)\nabla \hat{v}_h\Bigr]
        (\hat{\tilde{I}} - \widehat{P} \hat{\tilde{I}})\,\mathcal{J}_{F_h}d\hat{x}\nonumber\\
        & +\sum_{\widehat{K}\in\widehat{\mathcal{T}}_h}\int_{\widehat{K}}
        \bigl(\Omega\cdot\nabla \hat{v}_h\bigr)
        (\hat{\tilde{I}} - \widehat{P} \hat{\tilde{I}})\,(1-\mathcal{J}_{F_h})\,d\hat{x} \nonumber\\
        & -\sum_{\widehat{K}\in\widehat{\mathcal{T}}_h}\int_{\widehat{K}}
        \bigl(\Omega\cdot\nabla \hat{v}_h\bigr)
        (\hat{\tilde{I}} - \widehat{P} \hat{\tilde{I}})\,d\hat{x} =: R_1 + R_2 + R_3 .
    \end{align}

    From assumptions $H_2$ and $H_3$, we have
    \begin{equation*}
    \|\mathbb{J}_{F_h}^{-T}-\mathbb{I}\|_{L^\infty(\widehat{D}_h)} \lesssim h,\quad
    \|1-\mathcal{J}_{F_h}\|_{L^\infty(\widehat{D}_h)} \lesssim h,\quad
    \|\mathcal{J}_{F_h}\|_{L^\infty(\widehat{D}_h)} \lesssim 1.
    \end{equation*}
    Furthermore, employing the inverse inequality on $\widehat{V}_{h,k}$ \cite{BRENNER2008} to eliminate the gradient term $\nabla \hat{v}_h$, we deduce
    \begin{align}\label{eq3.7}
        R_1 + R_2 &\lesssim h\,
        \|\nabla \hat{v}_h\|_{L^2(\widehat{\mathcal{T}}_h)}\,
        \|\hat{\tilde{I}} - \widehat{P} \hat{\tilde{I}}\|_{L^2(\widehat{\mathcal{T}}_h)}
        \lesssim \|\hat{v}_h\|_{L^2(\widehat{\mathcal{T}}_h)}\,
        \|\hat{\tilde{I}} - \widehat{P} \hat{\tilde{I}}\|_{L^2(\widehat{\mathcal{T}}_h)} \nonumber\\
        &\lesssim \|v_h\|_{L^2(\mathcal{T}_h)}\,
        \|\tilde{I} - \Lambda\tilde{I}\|_{L^2(\mathcal{T}_h)} .
    \end{align}

    Finally, note that
    \[\Omega\cdot\nabla \hat{v}_h \in \widehat{V}_{h,k}.\]
    By the definition of the local $L^2$ projection onto $\widehat{V}_{h,k}$, we have $R_3 = 0$.
    Combining (\ref{eq3.5}), (\ref{eq3.6}) and (\ref{eq3.7}) yields the desired estimate, which completes the proof.
    \end{proof}

\subsection{Analysis of the consistency error for the upwind DG method}
\label{Section3.2}
    This subsection is devoted to estimating the consistency error $||\Lambda I - I_h||_{DG}$, which is closely linked to the discrete formulation (\ref{eq2.4}). To properly deal with $f_h$ and $g_h$ in the linear form $l(\cdot)$ in (\ref{eq2.6}), the errors arising from solution extension and geometric approximation must be considered separately.

    To characterize the effect of the auxiliary mapping $\Phi_h$ on the inflow boundary, we define the transformed solution $I^\sharp = I \circ \Phi_h^{-1}$, which belongs to $H^1(D_H)$ \cite{BRAMBLE1994}. It satisfies $I^\sharp = g_h$ on the mapped inflow boundary $\Phi_h(\Gamma^{-})$.
\subsubsection{Estimation of the difference between $\tilde{I}$ and $I^\sharp$}
\label{Section3.2.1}
    \begin{lemma}\label{lemma3.3}
    Let $\tilde{I}$ be the extension of exact solution $I$, and let $I^\sharp = I \circ \Phi_h^{-1}\in H^1(\mathcal{T}_h)$ denote its transformed counterpart on the computational domain $D_h$. It holds that
    \begin{equation}\label{eq3.8}
    \|\tilde{I} - I^\sharp\|_{DG}
    \lesssim h^{k+\frac{1}{2}} \|\tilde{I}\|_{L^2(\mathcal{T}_h)} + h^{k+\frac{3}{2}} \|\tilde{I}\|_{H^2(\mathcal{T}_h)}.
    \end{equation}
    \end{lemma}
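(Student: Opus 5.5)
The plan is to exploit that $\tilde I - I^\sharp$ is supported only near the boundary, and to split $\tilde I$ into a discrete part, controlled through inverse estimates, and a remainder, controlled through Taylor expansion. By $(H_1)$, $F$ is the identity on every $\widehat K$ whose vertices are all interior, so $F_h = F$ there and $\Phi_h = id$ on the corresponding elements. Hence $\tilde I - I^\sharp$ vanishes outside the boundary layer
\[
S_h := \bigcup\{K\in\mathcal T_h : \overline K\cap\Gamma_h\neq\emptyset\},
\]
whose measure is $O(h)$. In every norm below, the volume and face integrals therefore reduce to elements of $S_h$ and to faces of those elements. On $S_h$ we will use the pointwise identity
\[
(\tilde I - I^\sharp)(y) = \tilde I(y) - \tilde I(\Phi_h^{-1}(y)),\qquad |y-\Phi_h^{-1}(y)|\lesssim h^{k+1},
\]
which follows from (\ref{eq2.2}). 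We will also use that $\Phi_h^{-1}$ maps each $K$ into a neighbourhood of $K$ of width $O(h^{k+1})$, on which $\tilde I$ is defined because it lives on $D_H$.

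\textbf{Step 1 (splitting).} Write $\tilde I = \Lambda\tilde I + (\tilde I-\Lambda\tilde I)$ and treat the two pieces separately; both are composed with $\Phi_h^{-1}$ in the same way. For the remainder $e:=\tilde I-\Lambda\tilde I$, apply first-order Taylor expansion along the segment from $\Phi_h^{-1}(y)$ to $y$, change variables with (\ref{eq2.3}), and use the estimate $|e|_{H^1(\mathcal T_h)}\lesssim h\,\|\tilde I\|_{H^2(\mathcal T_h)}$, obtained as in (\ref{eq3.2}) with $k+1$ replaced by $2$. This gives
\[
\|e-e\circ\Phi_h^{-1}\|_{L^2(\mathcal T_h)}\lesssim h^{k+1}|e|_{H^1}\lesssim h^{k+2}\|\tilde I\|_{H^2(\mathcal T_h)} .
\]
The face contributions follow from the scaled trace inequality $\|w\|_{L^2(\partial K)}^2\lesssim h^{-1}\|w\|_{L^2(K)}^2+h\|\nabla w\|_{L^2(K)}^2$ together with the boundedness of $\Omega\cdot\boldsymbol n_h$. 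These yield the term $h^{k+3/2}\|\tilde I\|_{H^2(\mathcal T_h)}$.

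\textbf{Step 2 (discrete part).} For $w:=\Lambda\tilde I\in V_{h,k}$, the same Taylor argument gives
\[
\|w-w\circ\Phi_h^{-1}\|_{L^2(K)}\lesssim h^{k+1}\|\nabla w\|_{L^\infty}|K|^{1/2}.
\]
Pulling back to $\widehat K$ via $F_h$, we apply the inverse inequality on $\widehat V_{h,k}$, namely $\|\nabla \hat w\|_{L^\infty(\widehat K)}\lesssim h^{-1-d/2}\|\hat w\|_{L^2(\widehat K)}$. This gives $h^{k}\|w\|_{L^2(K)}$ elementwise. The missing factor $h^{1/2}$ in the volume term must come from the fact that only elements of $S_h$ contribute. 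For the face terms, the trace inverse inequality $\|\cdot\|_{L^2(\partial K)}\lesssim h^{-1/2}\|\cdot\|_{L^2(K)}$ costs $h^{-1/2}$. Finally, $L^2$-stability of $\widehat P$, combined with (\ref{eq3.2}), gives $\|\Lambda\tilde I\|_{L^2}\lesssim\|\tilde I\|_{L^2(\mathcal T_h)}$.

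\textbf{Main obstacle.} Step 2 is the delicate point. Counted naively, the discrete part gives only $h^{k}$ in the volume term and $h^{k-1/2}$ on faces, so the full factor $h^{1/2}$ has to be recovered from the geometry rather than from the function. My first attempt is to refine the displacement bound on each boundary element. There $id-\Phi_h^{-1}=(F_h-F)\circ F^{-1}$ is the Lagrange interpolation error, which vanishes at all $k$-th order nodes. I would then use the anisotropic estimate $|y-\Phi_h^{-1}(y)|\lesssim h^{k+1}\,\mathrm{dist}(y,\Gamma_h)/h$ together with a weighted inverse inequality for $\nabla\hat w$ near the curved face.

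If this does not close, the fallback is a duality argument for the volume term. Write $\|w-w\circ\Phi_h^{-1}\|_{L^2}=\sup_\varphi (w-w\circ\Phi_h^{-1},\varphi)/\|\varphi\|$ and change variables in the second inner product. This moves $\Phi_h$ onto $\varphi\,\mathcal J_{\Phi_h}$, which carries no derivative of $w$, and leaves $h^{k+1}$ times boundary-layer terms. Once the volume bound holds in the stronger form, the face terms follow from the trace inverse inequality as above. Summing the contributions of Steps 1 and 2 in the definition of $\|\cdot\|_{DG}$ then gives (\ref{eq3.8}).
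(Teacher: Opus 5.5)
\textbf{There is a genuine gap, and it is exactly the one you flag: Step 2 cannot be closed, because the bound it is meant to deliver is false.}

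Step 1 is fine once $\Lambda\tilde I$ is extended elementwise. Without that, the segments from $y$ to $\Phi_h^{-1}(y)$ cross interfaces where $e$ jumps, and leave $D_h$, where $\Lambda\tilde I$ is undefined. Since also $\|\Lambda\tilde I\|_{L^2}\lesssim\|\tilde I\|_{L^2}$, any successful Step 2 would prove (\ref{eq3.8}). But (\ref{eq3.8}) does not hold with a constant independent of $\tilde I$. A counterexample:
let $\rho$ be the signed distance to $\Gamma$, let $\phi\in C_c^\infty(-1,1)$ with $\phi'(0)=1$, and take $\tilde I=\phi(\rho/\delta)$ with $\delta=h^{1/2}$. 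This function is smooth near $\overline D$, so it is an admissible extension of its restriction $I$, which solves (\ref{eq1.1}) for suitable $f,g$.
For $y\in\Gamma_h$ we have $\Phi_h^{-1}(y)\in\Gamma$, so $(\tilde I-I^\sharp)(y)=\phi(\rho(y)/\delta)-\phi(0)\approx\rho(y)/\delta$.
In the generic case, $|\rho|\simeq h^{k+1}$ and $|\Omega\cdot\boldsymbol n_h|\gtrsim 1$ on a fixed portion of $\Gamma_h$. Then the boundary term alone gives $\|\tilde I-I^\sharp\|_{DG}\gtrsim h^{k+1/2}$.
On the other hand, $\|\tilde I\|_{L^2(\mathcal T_h)}\simeq\delta^{1/2}$ and $\|\tilde I\|_{H^2(\mathcal T_h)}\simeq\delta^{-3/2}$. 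So the right-hand side of (\ref{eq3.8}) is $\simeq h^{k+3/4}$, and the ratio blows up like $h^{-1/4}$.

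Your proposed repairs also fail for concrete reasons.
\begin{itemize}
\item The bound $|y-\Phi_h^{-1}(y)|\lesssim h^{k}\,\mathrm{dist}(y,\Gamma_h)$ is wrong. $\Phi_h^{-1}$ maps $\Gamma_h$ onto $\Gamma$, so the displacement has full size $h^{k+1}$ on $\Gamma_h$. It vanishes only at nodes and towards the interior, away from where the face term lives.
\item The duality fallback gains nothing. A generic $\varphi\in L^2$ has no smoothness with which to absorb $\varphi-(\varphi\circ\Phi_h)\mathcal J_{\Phi_h}$.
\item The face bookkeeping does not close either. A volume bound $h^{k+1/2}\|w\|_{L^2}$ plus an inverse trace inequality gives only $h^{k}$ on faces, and in any case $w\circ\Phi_h^{-1}\notin V_{h,k}$.
\end{itemize}
Your naive counts $h^k$ and $h^{k-1/2}$ are in fact sharp for discrete $w$ whose gradients are $O(h^{-1})$ across the boundary layer.

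\textbf{How the paper proceeds.} It takes the direct route with no splitting: the displacement bound (\ref{eq2.2}) for the volume part, the chain rule for the gradient, and the scaled trace inequality for the faces. It reaches $\|\tilde I\|_{L^2}$ only through two unjustified steps.
\begin{itemize}
\item In (\ref{eq3.9}), $\|\tilde I-\tilde I\circ\Phi_h^{-1}\|_{L^2}$ is bounded by $\|id-\Phi_h^{-1}\|_{L^\infty}\|\tilde I\|_{L^2}$, although a displacement estimate needs $\nabla\tilde I$.
\item In (\ref{eq3.11}), the contribution $h\cdot h^{2k}\|\tilde I\|_{H^1}^2$ coming from (\ref{eq3.10}) is recorded as $h^{2k+1}\|\tilde I\|_{L^2}^2$.
\end{itemize}
Done correctly, that route gives
$\|\tilde I-I^\sharp\|_{DG}\lesssim h^{k+1/2}\|\tilde I\|_{H^1(\mathcal T_h)}+h^{k+3/2}\|\tilde I\|_{H^2(\mathcal T_h)}$.
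This suffices for Theorem \ref{theorem3.1}, whose right-hand side already contains $\|\tilde I\|_{H^{k+1}(\mathcal T_h)}$.

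\textbf{What you should do instead.} Your own Step 1 Taylor-plus-trace argument, applied to $\tilde I$ itself rather than to $e$, proves exactly this corrected bound with no extension issues. The reason is that $\tilde I$ has no interelement jumps and lives on the convex set $D_H$, which contains every segment from $y$ to $\Phi_h^{-1}(y)$. More simply still, $\tilde I-I^\sharp$ is continuous, so the jump terms vanish, and the mean value theorem gives $\|\tilde I-I^\sharp\|_{DG}\lesssim h^{k+1}\|\tilde I\|_{W^{1,\infty}(D_H)}$. So drop the splitting and prove the corrected bound.
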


    \begin{proof}\quad
    Employing the definition of $I^\sharp$ and (\ref{eq2.2}) yields
    \begin{align}\label{eq3.9}
    \|\tilde{I} - I^\sharp\|_{L^2(\mathcal{T}_h)}^2
    &= \|\tilde{I} - I \circ \Phi_h^{-1}\|_{L^2(\mathcal{T}_h)}^2
    = \|\tilde{I} - \tilde{I} \circ \Phi_h^{-1}\|_{L^2(\mathcal{T}_h)}^2 \nonumber \\
    &\leq \| \mathrm{id} - \Phi_h^{-1} \|_{L^\infty(D_h)}^2 \|\tilde{I}\|_{L^2(\mathcal{T}_h)}^2
    \leq h^{2k+2} \|\tilde{I}\|_{L^2(\mathcal{T}_h)}^2.
    \end{align}

    Differentiating $I^\sharp$ and applying the chain rule provides its gradient representation
    \begin{equation*}
    \nabla I^\sharp = \nabla (I \circ \Phi_h^{-1})
    = \mathbb{J}_{\Phi_h}^{-T} (\nabla I) \circ \Phi_h^{-1}
    = \mathbb{J}_{\Phi_h}^{-T} (\nabla \tilde{I}) \circ \Phi_h^{-1} , \quad in \ \  D_h.
    \end{equation*}
    Then it yields the gradient error estimate
    \begin{align} \label{eq3.10}
    \|\nabla \tilde{I} - \nabla I^\sharp\|_{L^2(\mathcal{T}_h)}^2
    &= \|\nabla \tilde{I} - \mathbb{J}_{\Phi_h}^{-T}(\nabla \tilde{I}) \circ \Phi_h^{-1}\|_{L^2(\mathcal{T}_h)}^2 \nonumber \\
    &\leq \| (\mathbb{I} - \mathbb{J}_{\Phi_h}^{-T})\nabla \tilde{I}\|_{L^2(\mathcal{T}_h)}^2
    + \|\mathbb{J}_{\Phi_h}^{-T}(\nabla \tilde{I} - \nabla \tilde{I} \circ \Phi_h^{-1})\|_{L^2(\mathcal{T}_h)}^2 \nonumber \\
    &\lesssim h^{2k} \|\tilde{I}\|_{H^1(\mathcal{T}_h)}^2 + h^{2k+2} \|\tilde{I}\|_{H^{2}(\mathcal{T}_h)}^2,
    \end{align}
    where the first term reflects the auxiliary mapping approximation from (\ref{eq2.2}) as well as the second term stems from the translation of the gradient field.
    Therefore, by the trace theorem in \cite{BRENNER2008} and the estimates (\ref{eq3.9}) and (\ref{eq3.10}), the DG norm error satisfies
    \begin{align} \label{eq3.11}
    \|\tilde{I} - I^\sharp\|_{DG}^2
    &\lesssim \|\tilde{I} - I^\sharp\|_{L^2(\mathcal{T}_h)}^2 + \|\tilde{I} - I^\sharp\|_{L^2(\mathcal{E}_h)}^2 \nonumber \\
    &\lesssim \|\tilde{I} - I^\sharp\|_{L^2(\mathcal{T}_h)}^2
    + h^{-1}\|\tilde{I} - I^\sharp\|_{L^2(\mathcal{T}_h)}^2 + h \|\tilde{I} - I^\sharp\|_{H^1(\mathcal{T}_h)}^2 \nonumber \\
    &\lesssim h^{2k+1} \|\tilde{I}\|_{L^2(\mathcal{T}_h)}^2 + h^{2k+3} \|\tilde{I}\|_{H^2(\mathcal{T}_h)}^2.
    \end{align}
    The proof of Lemma \ref{lemma3.3} is completed.
    \end{proof}

\subsubsection{Geometric approximation error of inflow boundary}
\label{Section3.2.2}
    The estimate of the geometric approximation error between the inflow boundary $\Gamma^{-}_h$ of $D_h$ and $\Phi_h(\Gamma^{-})$ relies on the following mild assumptions \cite{Lee2013}

    \begin{itemize}
      \item[$(A_1)$] $\Gamma_h$ is a compact, oriented, piecewise $C^{k+1}$ smooth regular surface.
      \item[$(A_2)$] 0 is the regular value of the function $f(x) = \Omega\cdot \boldsymbol{n}_h(x)$, that is, $\forall x\in f^{-1}(0)$, $\nabla f(x)\neq 0$.
    \end{itemize}

    The main difficulty is that $\Gamma_h^-\setminus\Phi_h(\Gamma^-)$ is a subset of $\Gamma_h$, yet the information we could use is the pointwise condition between $\boldsymbol{n}_h$ and $\boldsymbol{n}$, which in turn is governed by the Jacobian matrix $\mathbb{J}_{\Phi_h}$ and its deviation from the identity $\mathbb{I}$. Moving from a pointwise condition on the normal vector to a quantitative bound on the measure of a subset is a nontrivial step that requires additional care. To address this issue, we introduce the set
    \[
    T_{h^k} = \{ x \in \Gamma_h : |\Omega\cdot \boldsymbol{n}_h(x)| \lesssim h^k \}.
    \]
    The analysis then proceeds in two steps. First, we show that the measure of $T_{h^k}$ is bounded by order $h^k$. Second, we prove that the portion of $\Gamma^-_h \backslash \Phi_h(\Gamma^-)$ can be controlled by $T_{h^k}$. The desired estimate follows.

    \begin{lemma}\label{lemma3.4}
        Suppose that the assumptions $A_1$ and $A_2$ hold, and let $h > 0$ be sufficiently small. Then the area of the set
        $T_{h^k} = \{ x \in \Gamma_h : |\Omega\cdot \boldsymbol{n}_h(x)| \lesssim h^k \}$ satisfies
        \begin{align}
        \mathrm{Area}(T_{h^k}) \lesssim h^k.
        \label{eq3.12}
        \end{align}
    \end{lemma}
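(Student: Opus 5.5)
The plan is a tubular-neighbourhood argument around the grazing set $Z=f^{-1}(0)=\{x\in\Gamma_h:\Omega\cdot\boldsymbol{n}_h(x)=0\}$, where $f(x)=\Omega\cdot\boldsymbol{n}_h(x)$ as in $(A_2)$. By $(A_1)$, on each smooth piece of $\Gamma_h$ the function $f$ is $C^k$, since $\boldsymbol{n}_h$ involves one derivative of a $C^{k+1}$ parametrization. By $(A_2)$, $0$ is a regular value, so $Z$ is a finite union of compact $C^k$ curves in the surface (points if $d=2$). The claim $\mathrm{Area}(T_{h^k})\lesssim h^k$ then says that the sublevel set $\{|f|\le c h^k\}$ is a thin band of width $O(h^k)$ around $Z$. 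I will use the regular value theorem and the implicit function theorem, in the spirit of \cite{Lee2013}.

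\textbf{Step 1 (uniform transversality).} By compactness of $\Gamma_h$ and continuity of $\nabla f$, regularity gives a constant $\gamma>0$ and $\delta>0$ such that $\|\nabla_{\Gamma_h} f\|\ge\gamma$ on the closed neighbourhood $U_\delta=\{x\in\Gamma_h:|f(x)|\le\delta\}$. The complement $\Gamma_h\setminus U_\delta$ is compact, and $|f|>\delta$ there. Hence for $h$ small enough that $c h^k<\delta$, we have $T_{h^k}\subset U_\delta$.

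\textbf{Step 2 (coarea/flow-box estimate).} On $U_\delta$ I apply the coarea formula on each smooth surface piece:
\[
\mathrm{Area}(T_{h^k})=\int_{-ch^k}^{ch^k}\int_{f^{-1}(t)}\frac{1}{\|\nabla_{\Gamma_h}f\|}\,d\mathcal{H}^{d-2}\,dt
\le \frac{1}{\gamma}\int_{-ch^k}^{ch^k}\mathcal{H}^{d-2}(f^{-1}(t))\,dt .
\]
It remains to bound the level-set length $\mathcal{H}^{d-2}(f^{-1}(t))$ uniformly for $|t|\le\delta$. Following the gradient flow of $f/\|\nabla f\|^2$ maps $f^{-1}(0)$ diffeomorphically onto $f^{-1}(t)$ with Lipschitz constant bounded by the $C^2$ norm of $f$ and by $\gamma$. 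Alternatively, one can use local implicit-function charts, finitely many by compactness. In either case $\mathcal{H}^{d-2}(f^{-1}(t))\lesssim \mathcal{H}^{d-2}(Z)+1\lesssim1$, and so $\mathrm{Area}(T_{h^k})\lesssim h^k$.

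\textbf{Main obstacle: uniformity in $h$.} $\Gamma_h$ depends on $h$, so $\gamma$, $\delta$ and the curvature bounds must not degenerate as $h\to0$. I would obtain these from the corresponding properties of the true $\Gamma$ together with (\ref{eq2.1})--(\ref{eq2.2}): $\boldsymbol{n}_h\circ\Phi_h-\boldsymbol{n}=O(h^k)$ with bounded higher derivatives elementwise. I would interpret $(A_2)$ as holding uniformly, i.e. with $\gamma$ independent of $h$, which is the natural reading of a constant hidden in $\lesssim$.

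Two further points need care. First, the edges of the piecewise surface, where $\boldsymbol{n}_h$ jumps, contribute only sets of measure zero. Second, the pieces meeting $Z$ are handled within each smooth patch, with a finite number of patches up to element-level refinement. Here I rely on the piecewise $C^{k+1}$ structure inherited from the finitely many smooth pieces of $\Gamma$, rather than counting mesh faces.
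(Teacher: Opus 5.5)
Your route differs from the paper's. The paper works inside each curved boundary face $F$: it puts tubular coordinates $(s,r)$ around $\Gamma^0_F$, bounds $\mathrm{Area}(T_{h^k}\cap F)\lesssim \mathrm{Length}(\Gamma^0_F)\,h^k$, and sums over faces. Your Step 1 usefully covers points where $|f|$ is small but nonzero, which the paper's proof skips.

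\textbf{The gap is in Step 2.} The bound $\mathcal{H}^{d-2}(f^{-1}(t))\lesssim \mathcal{H}^{d-2}(Z)+1$, obtained by flowing $Z$ along $\nabla f/\|\nabla f\|^2$, needs $f$ to be smooth on a fixed, $h$-independent number of patches. You justify this by saying $\Gamma_h$ inherits its piecewise structure from the finitely many smooth pieces of $\Gamma$. That is false for an isoparametric boundary:
\begin{itemize}
\item $\Gamma_h$ is smooth only on each curved face $F_h(\widehat F)$, with $\widehat F$ a boundary face of $\widehat{\mathcal T}_h$, and is merely continuous across their edges.
\item Across those edges $\boldsymbol n_h$, and hence $f$, jumps by $O(h^k)$.
\item There are $O(h^{1-d})$ such faces, and their edges have total $(d-2)$-measure of order $h^{-1}$.
\end{itemize}
So the flow stops at every edge, and $f^{-1}(t)$ is not the image of $Z$. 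Worse, $f$ can change sign across an edge without vanishing, so $f^{-1}(t)$ may have pieces on faces where $Z$ is empty. That edges have zero area does not help, because what you must control is the length of level sets. Your elementwise transfer of $\gamma,\delta$ from $\Gamma$ can give per-face uniform constants, but it does not rescue a flow that has to cross edges.

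\textbf{A repair: move the argument to $\Gamma$.} On each face, $\boldsymbol n_h(x)$ is the normalization of $\mathbb J_{\Phi_h}^{-T}\boldsymbol n$ evaluated at $\Phi_h^{-1}(x)$, and $\|\mathbb J_{\Phi_h}-\mathbb I\|\lesssim h^k$ elementwise by (\ref{eq2.2}). Hence $|\Omega\cdot\boldsymbol n_h(x)-\Omega\cdot\boldsymbol n(\Phi_h^{-1}(x))|\lesssim h^k$ on $\Gamma_h$; this is (\ref{eq3.14}) together with $\|\mathbb J_{\Phi_h}^{-T}\boldsymbol n\|=1+O(h^k)$. It follows that
\[
T_{h^k}\subset\Phi_h\bigl(\{y\in\Gamma:|\Omega\cdot\boldsymbol n(y)|\lesssim h^k\}\bigr).
\]
Your Steps 1--2 then run verbatim on the fixed surface $\Gamma$, which genuinely has finitely many smooth pieces. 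The constants $\gamma,\delta$ are independent of $h$ provided $0$ is a regular value of $\Omega\cdot\boldsymbol n$ on each piece of $\Gamma$; that is the $h$-independent hypothesis you already meant to use in place of a uniform $(A_2)$. The bounded surface Jacobian of $\Phi_h$, from (\ref{eq2.3}), finishes the proof. This needs only $L^\infty$ closeness of the normals, with no derivative bounds on $\boldsymbol n_h$.

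If you prefer to stay on $\Gamma_h$, you must argue face by face as the paper does. With uniform per-face transversality, $f^{-1}(t)\cap F$ has measure $\lesssim h^{d-2}$. You would still need a comparison with $\Gamma$ to show that each level set $f^{-1}(t)$, $|t|\lesssim h^k$, meets only $O(h^{2-d})$ faces.
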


    \begin{proof}\quad
        Let $\mathcal{E}^c_h$ denote the set of curved boundary faces of $\Gamma_h$. For any $F\in\mathcal{E}^c_h$, assumption $(A_2)$ implies that $0$ is a regular value of $\Omega\cdot\boldsymbol{n}_h|_F$. Consequently, the zero level set $\Gamma^0_F := f|_F^{-1}(0)$, if non-empty, is a finite union of $C^1$ $(n-2)$-dimensional submanifolds of $F$.

        We now focus on faces for which $\Gamma^0_F$ is non-empty. By the regular value condition $\nabla f \neq 0$ on $\Gamma^0_F$, there exists a tubular neighborhood of $\Gamma^0_F$ within $F$ (see, e.g., \cite{Lee2013}). In this neighborhood we can introduce coordinates $(s, r)$, where $s$ parametrizes $\Gamma^0_F$ and $r$ is a normal coordinate (in the direction of $\nabla f$). These coordinates satisfy the two-sided estimate
        \begin{align*}
        c_1 |r| \leq |f(x)| \leq c_2 |r|, \quad \text{and} \quad |r| \approx \operatorname{dist}(x, \Gamma^0_F),
        \end{align*}
        with positive constants $c_1, c_2$ independent of $h$ and the face $F$. Consequently, for any $x \in F$ with $|f(x)| \lesssim h^k$, we have $|r(x)| \lesssim h^k$. Recalling the definition $T_{h^k} = { x\in\Gamma_h : |f(x)| \lesssim h^k }$, it follows that
        \begin{align*}
        F_*:=T_{h^k} \cap F \subset \{ x : |r(x)| \lesssim h^k \}.
        \end{align*}

        In the $(s,r)$-coordinates, the area element on the face $F$ is given by $dS = J(s,r)dsdr$, where $J(s,0)=1$ and $|J(s,r)-1| \lesssim |r|$ due to the smoothness of $F$. Integrating over the tubular neighborhood $F_* = T_{h^k} \cap F$ yields the local estimate
        \begin{align}\label{eq3.13.5}
        \mathrm{Area}(T_{h^k} \cap F) \lesssim \mathrm{Length}(\Gamma^0_F \cap F) \cdot h^k.
        \end{align}

        The global estimate follows by summing over all curved boundary faces $\mathcal{E}^c_h$. By the shape-regularity and quasi-uniformity of $\mathcal{T}_h$, the total length of the discrete zero level sets remains bounded independently of $h$
        \begin{align*}
          \sum\limits_{F\in\mathcal{F}_h}Length(\Gamma_F^0)\lesssim 1.
        \end{align*}

        Summing the local estimates \eqref{eq3.13.5} over all faces, we obtain
        \begin{align}
        \mathrm{Area}(T_{h^k}) = \sum_{F \in \mathcal{F}_h} \mathrm{Area}(T_{h^k} \cap F) \lesssim \left( \sum_{F \in \mathcal{F}_h} \mathrm{Length}(\Gamma^0_F \cap F) \right) h^k \lesssim h^k.\label{eq3.13}
        \end{align}
        This completes the proof of Lemma \ref{lemma3.4}.
    \end{proof}
    The following lemma is essential for characterizing the boundedness of $\Gamma^{-}_h \backslash \Phi_h(\Gamma^{-})$.

    \begin{lemma}\label{lemma3.5}
        Let $A \Delta B = (A\setminus B)\cup (B\setminus A)$ denote the symmetric difference of two sets $A$ and $B$. Suppose $\Gamma^{-}$ and $\Gamma^{-}_h$ are the inflow boundaries of $D$ and $D_h$, respectively. Then the area of symmetric difference under the mapping $\Phi_h$ satisfies
        \begin{equation}\label{eq3.14.1}
            Area\left( \Gamma^{-}_h  \Delta  \Phi_h(\Gamma^{-}) \right) \lesssim h^k.
        \end{equation}
    \end{lemma}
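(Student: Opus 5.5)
The plan is to compare, at corresponding points, the sign of $\Omega\cdot\boldsymbol{n}_h$ on $\Gamma_h$ with the sign of $\Omega\cdot\boldsymbol{n}$ on $\Gamma$, using the auxiliary mapping $\Phi_h$. Once this comparison is in place, the symmetric difference will be shown to lie inside the thin set $T_{h^k}$ of Lemma \ref{lemma3.4}.

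\textbf{Step 1: pointwise normal estimate.} Since $\Phi_h=F_h\circ F^{-1}$ maps $\Gamma$ onto $\Gamma_h$ (boundary faces go to boundary faces), for $x\in\Gamma$ and $y=\Phi_h(x)$ we have the standard transformation of normals
\[
\boldsymbol{n}_h(y)=\frac{\mathbb{J}_{\Phi_h}^{-T}(x)\boldsymbol{n}(x)}{\|\mathbb{J}_{\Phi_h}^{-T}(x)\boldsymbol{n}(x)\|}.
\]
By (\ref{eq2.2}) with $m=1$, $\|\mathbb{I}-\mathbb{J}_{\Phi_h}\|_{L^\infty}\lesssim h^k$. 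Since $\mathbb{J}_{\Phi_h}$ is then invertible and uniformly bounded by (\ref{eq2.3}), a Neumann series argument gives $\|\mathbb{I}-\mathbb{J}_{\Phi_h}^{-T}\|\lesssim h^k$. The normalizing factor therefore differs from $1$ by $O(h^k)$, and we obtain
\[
\|\boldsymbol{n}_h(\Phi_h(x))-\boldsymbol{n}(x)\|\le C_0h^k ,\qquad x\in\Gamma ,
\]
on each smooth piece. Since $\|\Omega\|=1$, this yields $|\Omega\cdot\boldsymbol{n}_h(\Phi_h(x))-\Omega\cdot\boldsymbol{n}(x)|\le C_0h^k$.

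\textbf{Step 2: inclusion into $T_{h^k}$.} Let $y\in\Gamma_h^-\setminus\Phi_h(\Gamma^-)$ and write $y=\Phi_h(x)$ with $x\in\Gamma\setminus\Gamma^-$. Then $\Omega\cdot\boldsymbol{n}_h(y)<0\le\Omega\cdot\boldsymbol{n}(x)$, and by Step 1
\[
0<-\Omega\cdot\boldsymbol{n}_h(y)\le \Omega\cdot\boldsymbol{n}(x)-\Omega\cdot\boldsymbol{n}_h(y)\le C_0h^k ,
\]
so $|\Omega\cdot\boldsymbol{n}_h(y)|\le C_0h^k$. Symmetrically, if $y=\Phi_h(x)$ with $x\in\Gamma^-$ and $y\notin\Gamma_h^-$, then $\Omega\cdot\boldsymbol{n}(x)<0\le\Omega\cdot\boldsymbol{n}_h(y)$, which again forces $|\Omega\cdot\boldsymbol{n}_h(y)|\le C_0h^k$. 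Hence
\[
\Gamma_h^-\,\Delta\,\Phi_h(\Gamma^-)\subset T_{h^k}\cup\mathcal{N}_h ,
\]
where the constant implicit in the definition of $T_{h^k}$ is taken to be $C_0$, and $\mathcal{N}_h$ is the union of the images of the nonsmooth edges or corners of $\Gamma$ and of the inter-face edges of $\Gamma_h$, where the normals are undefined. The set $\mathcal{N}_h$ is a finite union of lower-dimensional sets and has zero area.

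\textbf{Step 3: conclusion.} Lemma \ref{lemma3.4} now gives
\[
\mathrm{Area}\left(\Gamma_h^-\,\Delta\,\Phi_h(\Gamma^-)\right)\le \mathrm{Area}(T_{h^k})\lesssim h^k .
\]

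\textbf{Main obstacle.} The delicate point is Step 1 on boundary faces. Two things must be verified there. First, $\Phi_h$ really maps each smooth piece of $\Gamma$ onto the corresponding curved face of $\Gamma_h$; this follows from $F(\widehat{\Gamma}_h)=\Gamma$ and $F_h(\widehat{\Gamma}_h)=\Gamma_h$. Second, the $W^{1,\infty}$ bound (\ref{eq2.2}) holds up to the boundary piecewise, so that the normal formula can be applied elementwise. If the global bound is only available elementwise, I would apply the formula face by face, using $(H_3)$ to keep $C_0$ uniform.
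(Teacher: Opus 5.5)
Your proposal is correct and follows essentially the paper's route: transform the normal via $\boldsymbol{n}_h\circ\Phi_h=\mathbb{J}_{\Phi_h}^{-T}\boldsymbol{n}/\|\mathbb{J}_{\Phi_h}^{-T}\boldsymbol{n}\|$, use (\ref{eq2.2}) to get an $O(h^k)$ discrepancy in $\Omega\cdot\boldsymbol{n}$, conclude by the sign argument that $\Gamma_h^-\Delta\Phi_h(\Gamma^-)\subset T_{h^k}$, and finish with Lemma~\ref{lemma3.4}. The differences are cosmetic. The paper compares $\boldsymbol{n}_h$ with the rescaled normal $\boldsymbol{n}_s\circ\Phi_h^{-1}=\mathbb{J}_{\Phi_h}^{T}\boldsymbol{n}_h$, which avoids your Neumann-series and normalization step, whereas your explicit sign inequalities and the null set $\mathcal{N}_h$ make the inclusion step more careful than the paper's.
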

    \begin{proof}\quad
        Inspired by Eq.(A.20) in \cite{Lehrenfeld2018}, the relation between the outward normal vectors $\boldsymbol{n}$ on $\Gamma$ and $\boldsymbol{n}_h$ on $\Gamma_h$ is given by $\boldsymbol{n}_h(x) = \mathbb{J}_{\Phi_h}^{-T} \boldsymbol{n} / \|\mathbb{J}_{\Phi_h}^{-T} \boldsymbol{n}\|$. Let $\boldsymbol{n}_s = \boldsymbol{n} / \|\mathbb{J}_{\Phi_h}^{-T} \boldsymbol{n}\|$ denote the scaled normal on $\Gamma$. Motivated by Eq.(2.17) in \cite{Kashiwabara2025}, we apply the mapping estimate (\ref{eq2.2}) and Proposition 2 in \cite{LENOIR1986} to arrive at
        \begin{align}\label{eq3.14.2}
            \|\boldsymbol{n}_s \circ \Phi_h^{-1} - \boldsymbol{n}_{h}\|_{L^{\infty}(\Gamma_h)}
            &= \|\mathbb{J}^{T}_{\Phi_h}\boldsymbol{n}_h - \boldsymbol{n}_h\|_{L^{\infty}(\Gamma_h)} \nonumber\\
            &= \|(\mathbb{J}^{T}_{\Phi_h} - \mathbb{I})\boldsymbol{n}_h\|_{L^{\infty}(\Gamma_h)}
            \lesssim h^k.
        \end{align}
        Consequently, we have
        \begin{align}\label{eq3.14}
            |\Omega \cdot \boldsymbol{n}_s \circ\Phi_h^{-1}(x) - \Omega \cdot \boldsymbol{n}_{h}(x)| \lesssim h^k , \quad \forall x \in \Gamma_h.
        \end{align}

        Recall the definitions of the inflow boundaries
        \[
        \Gamma^{-} = \{x \in \Gamma : \Omega \cdot \boldsymbol{n}(x) < 0\},\qquad
        \Gamma^{-}_h = \{x \in \Gamma_h : \Omega \cdot \boldsymbol{n}_h(x) < 0\}.
        \]
        Take a point $x \in \Gamma_h^{-} \setminus \Phi_h(\Gamma^{-})$. Since $x \notin \Phi_h(\Gamma^{-})$, we have $\Phi_h^{-1}(x) \notin \Gamma^{-}$, which implies $\Omega \cdot \boldsymbol{n}_s \circ \Phi_h^{-1}(x) \ge 0$. A symmetric argument for $x \in \Phi_h(\Gamma^{-}) \setminus \Gamma_h^{-}$ yields the analogous inequality. Consequently, the symmetric difference $\Gamma_h^{-} \Delta \Phi_h(\Gamma^{-})$ is contained in the $h^k$-tube
        \[
        T_{h^k} = \{ x\in \Gamma_h : |\Omega\cdot\boldsymbol{n}_h(x)| \lesssim h^k \}.
        \]
        Applying Lemma \ref{lemma3.4} to this inclusion yields $Area(\Gamma_h^{-} \Delta \Phi_h(\Gamma^{-})) \lesssim h^k$, which completes the proof of (\ref{eq3.14.1}).
    \end{proof}

    \begin{lemma}\label{lemma3.6}
        Let $\mathcal{E}^0_h$ denote the set of boundary faces $F$ such that $\overline{F} \cap \Gamma^0_h \neq \emptyset$, where $\Gamma^0_h := \{ x\in\Gamma_h : \Omega\cdot\boldsymbol{n}_h(x)=0 \}$. Then for any $x\in\mathcal{E}^0_h$,
        \begin{equation}\label{eq3.16}
            |\Omega\cdot\boldsymbol{n}_h(x)| \lesssim h \,\|\boldsymbol{n}_h\|_{W^{1,\infty}(\mathcal{E}^0_h)},
        \end{equation}
        where the norm is understood as $\max_{F\in\mathcal{E}^0_h} \|\boldsymbol{n}_h\|_{W^{1,\infty}(F)}$. Moreover, for any $x\in \Gamma^{-}_h \Delta \Phi_h(\Gamma^{-})$,
        \begin{equation}\label{eq3.17}
            |\Omega\cdot\boldsymbol{n}_h(x)| \lesssim h^k \,\|\boldsymbol{n}_h\|_{W^{1,\infty}(\Gamma_h)}.
        \end{equation}
    \end{lemma}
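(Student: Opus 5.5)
The plan is to argue face by face with a first-order Taylor (mean value) estimate for the scalar function $f(x)=\Omega\cdot\boldsymbol{n}_h(x)$ restricted to a single curved boundary face. The key observation is that $f$ vanishes somewhere in the closure of every face in $\mathcal{E}^0_h$. Any other point of the face then lies within distance $\lesssim h$ of that zero. This gives \eqref{eq3.16}, and combining it with the normal estimate \eqref{eq3.14} gives \eqref{eq3.17}.

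For \eqref{eq3.16}, fix $F\in\mathcal{E}^0_h$ and $x\in \overline F$. By definition of $\mathcal{E}^0_h$ there is $x_0\in\overline F\cap\Gamma^0_h$, so $\Omega\cdot\boldsymbol{n}_h(x_0)=0$. Since $F=F_h(\widehat F)$ with $\widehat F$ a flat face of diameter $\lesssim h$, and $F_h$ is a $C^{k+1}$ polynomial map with Jacobian bounded by \eqref{eq2.1} and $(H_2)$, the points $x$ and $x_0$ can be joined by the image of a segment in $\widehat F$. This curve lies in $\overline F$ and has length $\lesssim h$. Integrating $\nabla_{\Gamma}f$ along it and using $\|\Omega\|=1$ gives
\[
|\Omega\cdot\boldsymbol{n}_h(x)|=|\Omega\cdot(\boldsymbol{n}_h(x)-\boldsymbol{n}_h(x_0))|\le \mathrm{Length}\cdot\|\nabla\boldsymbol{n}_h\|_{L^\infty(F)}\lesssim h\,\|\boldsymbol{n}_h\|_{W^{1,\infty}(F)}.
\]
Taking the maximum over $F\in\mathcal{E}^0_h$ yields \eqref{eq3.16}. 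The only care needed is that $\boldsymbol{n}_h$ is smooth up to the boundary of each single face. This holds because $\boldsymbol{n}_h=\mathbb{J}_{F_h}^{-T}\hat{\boldsymbol n}/\|\mathbb{J}_{F_h}^{-T}\hat{\boldsymbol n}\|$ with $\hat{\boldsymbol n}$ constant on $\widehat F$, so no jump across faces enters the argument.

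For \eqref{eq3.17}, take $x\in\Gamma^-_h\Delta\Phi_h(\Gamma^-)$. As in the proof of Lemma \ref{lemma3.5}, the quantities $\Omega\cdot\boldsymbol{n}_h(x)$ and $\Omega\cdot\boldsymbol{n}_s\circ\Phi_h^{-1}(x)$ have opposite (weak) signs: one is $<0$ and the other is $\ge0$. The normal $\boldsymbol{n}_s$ is a positive rescaling of $\boldsymbol{n}$, so its sign matches that of $\Omega\cdot\boldsymbol n$. If two reals $a,b$ satisfy $a<0\le b$, then $|a|\le|a-b|$. Applying this with \eqref{eq3.14} gives $|\Omega\cdot\boldsymbol{n}_h(x)|\le|\Omega\cdot\boldsymbol{n}_h(x)-\Omega\cdot\boldsymbol{n}_s\circ\Phi_h^{-1}(x)|\lesssim h^k$. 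To display the factor $\|\boldsymbol{n}_h\|_{W^{1,\infty}(\Gamma_h)}$, I would revisit \eqref{eq3.14.2}. There the bound $\|(\mathbb{J}^T_{\Phi_h}-\mathbb{I})\boldsymbol{n}_h\|\le\|\mathbb{J}_{\Phi_h}-\mathbb{I}\|_{L^\infty}\|\boldsymbol{n}_h\|_{L^\infty}$ together with \eqref{eq2.2} ($m=1$) gives $h^k\|\boldsymbol{n}_h\|_{L^\infty}\le h^k\|\boldsymbol{n}_h\|_{W^{1,\infty}(\Gamma_h)}$.

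The main obstacle is the geometric step in \eqref{eq3.16}. One has to guarantee a path inside the closed curved face from $x$ to a zero of $f$, with length $O(h)$ and with $\boldsymbol{n}_h$ differentiable along it. I would handle this through the reference face and the uniform bounds $(H_2)$, $(H_3)$, and \eqref{eq2.1} on $\mathbb{J}_{F_h}$. There is also a subtle point in \eqref{eq3.17}: $\boldsymbol{n}_s$ is not unit. Only its sign is used, so this causes no difficulty, but I would state it explicitly.
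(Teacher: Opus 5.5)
Your proof is correct. For \eqref{eq3.16} it is the paper's argument: a zero $x_0\in\overline F$ at distance $O(h)$, plus Lipschitz continuity of $\Omega\cdot\boldsymbol n_h$ on the single face. You make the path-length step rigorous through the reference face. Like the paper, you tacitly read $\overline F\cap\Gamma^0_h$ through the trace of $\boldsymbol n_h|_F$; if the zero came from a neighbouring face's normal, the inter-face jump of $\boldsymbol n_h$ would enter.

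For \eqref{eq3.17} your route is genuinely different and shorter. The paper asserts that Lemma~\ref{lemma3.5} gives $\operatorname{dist}(x,\Gamma^0_h)\lesssim h^k$ on the symmetric difference. An area bound alone does not give this; one needs the tubular coordinates from the proof of Lemma~\ref{lemma3.4}, i.e.\ $(A_2)$ with $h$-uniform constants. The paper then picks a nearest $x_0\in\Gamma^0_h$ and uses Lipschitz continuity of $\Omega\cdot\boldsymbol n_h$ on all of $\Gamma_h$.

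You instead use the opposite weak signs of $a=\Omega\cdot\boldsymbol n_h(x)$ and $b=\Omega\cdot\boldsymbol n_s\circ\Phi_h^{-1}(x)$. Since $ab\le 0$ gives $|a-b|=|a|+|b|\ge|a|$ in either ordering, \eqref{eq3.14} yields $|a|\lesssim h^k$ at once. This is precisely the inequality by which the paper's proof of Lemma~\ref{lemma3.5} places the set in $T_{h^k}$. So the paper's detour through distances starts from the bound you prove and then returns to it.

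Your version needs neither $(A_2)$ nor Lemma~\ref{lemma3.4}. It also needs no continuity of $\boldsymbol n_h$ across boundary faces. The paper's global Lipschitz step implicitly requires that continuity, yet the isoparametric surface is only $C^0$ across face edges: $\boldsymbol n_h$ jumps by $O(h^k)$ on smooth parts of $\Gamma$ and by $O(1)$ at its corners. Your argument also shows the factor $\|\boldsymbol n_h\|_{W^{1,\infty}(\Gamma_h)}$ is cosmetic, since $|\boldsymbol n_h|=1$ makes it at least $1$; revisiting \eqref{eq3.14.2} is unnecessary.

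What the paper's route would add, if its steps were justified, is localization: $\Gamma^-_h\Delta\Phi_h(\Gamma^-)$ lies within $O(h^k)$ of the discrete characteristic set $\Gamma^0_h$. That is the kind of information behind restricting to $D_c$ in Theorem~\ref{theorem3.1}, but \eqref{eq3.17} itself does not need it.
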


    \begin{proof}\quad
        For any $x\in\mathcal{E}^0_h$, there exists a face $F$ containing $x$ with $\overline{F}\cap\Gamma^0_h \neq \emptyset$. Choose $x_0\in \overline{F}\cap\Gamma^0_h$ such that $|x-x_0|\lesssim h$. Since $\Omega\cdot\boldsymbol{n}_h$ is Lipschitz on $F$ and $\Omega\cdot\boldsymbol{n}_h(x_0)=0$, we obtain
        \begin{equation}\label{eq3.17.1}
            |\Omega\cdot \boldsymbol{n}_h(x)| = |\Omega\cdot \boldsymbol{n}_h(x)-\Omega\cdot \boldsymbol{n}_h(x_0)|
            \le h \,\|\boldsymbol{n}_h\|_{W^{1,\infty}(F)} \le h \,\|\boldsymbol{n}_h\|_{W^{1,\infty}(\mathcal{E}^0_h)}.
        \end{equation}
        This proves (\ref{eq3.16}).

        Lemma \ref{lemma3.5} implies that every $x\in \Gamma^{-}_h \Delta \Phi_h(\Gamma^{-})$ satisfies $\operatorname{dist}(x,\Gamma^0_h) \lesssim h^k$. Since $\Gamma^0_h$ is closed, there exists $x_0\in\Gamma^0_h$ with $|x-x_0|\lesssim h^k$. Applying the Lipschitz continuity of $\Omega\cdot\boldsymbol{n}_h$ on $\Gamma_h$ gives
        \begin{equation}\label{eq3.17.2}
            |\Omega\cdot \boldsymbol{n}_h(x)| = |\Omega\cdot \boldsymbol{n}_h(x)-\Omega\cdot \boldsymbol{n}_h(x_0)|
            \lesssim h^k \,\|\boldsymbol{n}_h\|_{W^{1,\infty}(\Gamma_h)}.
        \end{equation}
        Thus (\ref{eq3.17}) holds, completing the proof.
    \end{proof}

    Now we present our main results of this paper.
    \begin{theorem}\label{theorem3.1}
        Let $I$ be the solution of (\ref{eq1.1}) and let $\tilde{I}\in H^{k+1}(\mathcal{T}_h)$ denote its regular extension to the computational domain $D_h$.
        Let $f_h\in W^{1,\infty}(\mathcal{T}_h)$ and let $I_h$ be the numerical solution of (\ref{eq2.4}).
        Suppose that assumptions $(H_1)$-$(H_3)$ and $(A_1)$-$(A_2)$ hold. Then for any $k\geq 2$,
        \begin{equation}
            \|\tilde{I} - I_h\|_{DG} \lesssim h^{k+\frac{1}{2}}
            \left(
                \|f_h\|_{W^{1,\infty}(\mathcal{T}_h)} +
                \|\tilde{I}\|_{H^{k+1}(\mathcal{T}_h)} +
                \|\tilde{I}\|_{L^{\infty}(\Gamma_h)}
            \right). \label{eq3.18}
        \end{equation}
    \end{theorem}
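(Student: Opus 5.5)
We split the error through the isoparametric auxiliary operator $\Lambda$ of (\ref{eq3.0}):
\[
\|\tilde I - I_h\|_{DG} \le \|\tilde I - \Lambda\tilde I\|_{DG} + \|\Lambda\tilde I - I_h\|_{DG}.
\]
By Lemma \ref{lemma3.1}, the first term is bounded by $h^{k+1/2}\|\tilde I\|_{H^{k+1}(\mathcal{T}_h)}$. For the second term, set $\xi_h := \Lambda\tilde I - I_h \in V_{h,k}$. Coercivity gives $\|\xi_h\|_{DG}^2 = \mathcal{B}(\xi_h,\xi_h)$. Inserting $\tilde I$ and using (\ref{eq2.4}) yields
\[
\|\xi_h\|_{DG}^2 = -\mathcal{B}(\tilde I - \Lambda\tilde I,\xi_h) + \bigl[\mathcal{B}(\tilde I,\xi_h) - l(\xi_h)\bigr].
\]
Lemma \ref{lemma3.2} together with Lemma \ref{lemma3.1} bounds the first term by $h^{k+1/2}\|\tilde I\|_{H^{k+1}}\|\xi_h\|_{DG}$. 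It therefore remains to show that the consistency functional $E(\xi_h):=\mathcal{B}(\tilde I,\xi_h)-l(\xi_h)$ satisfies $|E(\xi_h)|\lesssim h^{k+1/2}(\cdots)\|\xi_h\|_{DG}$. Dividing by $\|\xi_h\|_{DG}$ then finishes the proof.

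To expand $E$, we use form (\ref{eq2.5}) and the fact that $\tilde I$ is continuous on interior faces, so all jump terms vanish. Writing $\tilde f=\Omega\cdot\nabla\tilde I+\sigma\tilde I$, we get
\[
E(\xi_h)=\sum_K\int_K(\tilde f - f_h)\xi_h\,dx-\sum_K\int_{\partial_-K\cap\Gamma_h^-}(\Omega\cdot\boldsymbol{n}_h)(\tilde I - g_h)\xi_h^+\,ds=:E_1+E_2.
\]
For $E_1$: on $\Phi_h(D)=D_h$ we have $f_h=f\circ\Phi_h^{-1}$, while $\tilde f=f$ on $D\cap D_h$. We split $\tilde f - f_h=(\tilde f - f)+(f - f\circ\Phi_h^{-1})$. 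The second piece is bounded pointwise by $\|\mathrm{id}-\Phi_h^{-1}\|_{L^\infty}\|f_h\|_{W^{1,\infty}}\lesssim h^{k+1}\|f_h\|_{W^{1,\infty}}$ via (\ref{eq2.2}). The first piece is supported in the strip $D_h\setminus D$, which has measure $O(h^{k+1})$, and there it is bounded by $\|\tilde I\|_{H^{k+1}}$ (through Sobolev embedding) plus $\|f_h\|_{L^\infty}$. Cauchy--Schwarz with $\sigma\ge m$ then gives $|E_1|\lesssim h^{k+1}(\|f_h\|_{W^{1,\infty}}+\|\tilde I\|_{H^{k+1}})\|\xi_h\|_{DG}$, which is more than enough.

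For $E_2$ we split $\Gamma_h^-$ into $\Gamma_h^-\cap\Phi_h(\Gamma^-)$ and $\Gamma_h^-\setminus\Phi_h(\Gamma^-)$.
\begin{itemize}
\item On $\Gamma_h^-\cap\Phi_h(\Gamma^-)$ we have $g_h=I^\sharp$, so the integrand is $(\tilde I - I^\sharp)\xi_h$. Weighted Cauchy--Schwarz with weight $|\Omega\cdot\boldsymbol{n}_h|$, followed by Lemma \ref{lemma3.3}, bounds this part by $h^{k+1/2}\|\tilde I\|_{H^2}\|\xi_h\|_{DG}$. The boundary contribution of the DG norm of $\tilde I - I^\sharp$ is exactly what Lemma \ref{lemma3.3} controls.
\item On $S:=\Gamma_h^-\setminus\Phi_h(\Gamma^-)$ we have $g_h=0$ (zero extension), so the integrand is $(\Omega\cdot\boldsymbol{n}_h)\tilde I\,\xi_h^+$. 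We write
\[
\Bigl|\int_S(\Omega\cdot\boldsymbol{n}_h)\tilde I\xi_h\Bigr|\le\|\tilde I\|_{L^\infty(\Gamma_h)}\Bigl(\int_S|\Omega\cdot\boldsymbol{n}_h|\Bigr)^{1/2}\Bigl(\int_S|\Omega\cdot\boldsymbol{n}_h|\xi_h^2\Bigr)^{1/2}.
\]
The last factor is at most $\|\xi_h\|_{DG}$. By Lemma \ref{lemma3.6} (\ref{eq3.17}), $|\Omega\cdot\boldsymbol{n}_h|\lesssim h^k$ on $S$, and by Lemma \ref{lemma3.5} the area of $S$ is $\lesssim h^k$. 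Hence $\int_S|\Omega\cdot\boldsymbol{n}_h|\lesssim h^{2k}$ (using that $\|\boldsymbol{n}_h\|_{W^{1,\infty}}\lesssim 1$ by (H$_3$)), and this part is bounded by $h^{k}\|\tilde I\|_{L^\infty(\Gamma_h)}\|\xi_h\|_{DG}$.
\end{itemize}

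The main obstacle is the half power of $h$ missing from the $S$ bound, which gives $h^k$ rather than $h^{k+1/2}$. To recover it, I would first try to sharpen the weight estimate. Using (\ref{eq3.14}) together with the sign change on $S$, one has $|\Omega\cdot\boldsymbol{n}_h|\le|\Omega\cdot\boldsymbol{n}_h-\Omega\cdot\boldsymbol{n}_s\circ\Phi_h^{-1}|$, and this difference is really $O(h^{k})$ in measure-weighted form. Alternatively, one can combine the area bound of Lemma \ref{lemma3.4} applied at level $h^{k}$, the linear behaviour of $f=\Omega\cdot\boldsymbol{n}_h$ in the normal coordinate $r$, and the condition $k\ge2$. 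Together these give $\int_S|\Omega\cdot\boldsymbol{n}_h|\lesssim\int_{|r|\lesssim h^k}|r|\,dr\lesssim h^{2k}$, and for $k\ge2$ we have $2k\ge 2k+1-\dots$ since $h^{2k}\le h^{k+1/2}\cdot h^{k-1/2}$. The point is to place the remaining $\|\tilde I\|_{L^\infty}$ factor against a $\xi_h$ factor, estimated via the tube width, so that at least $h^{k+1/2}$ survives. If that bookkeeping falls short, the fallback is to use the tube estimate squared, that is, $\int_S|\Omega\cdot\boldsymbol{n}_h|^2\lesssim h^{3k}$, which is $\le h^{2k+1}$ for $k\ge1$. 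This is the step where the hypothesis $k\ge2$ and the extra $\|\tilde I\|_{L^\infty(\Gamma_h)}$ term in (\ref{eq3.18}) enter.

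Collecting all pieces then yields (\ref{eq3.18}).
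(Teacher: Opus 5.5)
You follow the paper's architecture: you split through $\Lambda$, use Lemmas \ref{lemma3.1}--\ref{lemma3.2} for the projection part, and decompose the consistency functional. The inflow term is split over $\Gamma_h^-\cap\Phi_h(\Gamma^-)$, handled by Lemma \ref{lemma3.3}, and over $S=\Gamma_h^-\setminus\Phi_h(\Gamma^-)$, where $g_h=0$. However, you leave the $S$ term open, and none of your proposed repairs closes it. With only Lemmas \ref{lemma3.5} and \ref{lemma3.6}, the weighted Cauchy--Schwarz route gives at best $\bigl(\sup_S|\Omega\cdot\boldsymbol{n}_h|\,\mathrm{Area}(S)\bigr)^{1/2}\lesssim h^k$. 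Reaching $h^{k+1/2}$ that way would require $\int_S|\Omega\cdot\boldsymbol{n}_h|\,ds\lesssim h^{2k+1}$, and nothing provides that. Your tube computation $\int_{|r|\lesssim h^k}|r|\,dr\lesssim h^{2k}$ gives the same $h^{2k}$, still under a square root, so writing $h^{2k}=h^{k+1/2}h^{k-1/2}$ buys nothing. Your fallback $\int_S|\Omega\cdot\boldsymbol{n}_h|^2\,ds\lesssim h^{3k}$ points the right way, but after Cauchy--Schwarz it leaves the unweighted factor $\|\xi_h\|_{L^2(S)}$. That factor is not part of $\|\xi_h\|_{DG}$: the DG norm sees $\xi_h$ on $\Gamma_h$ only through the weight $|\Omega\cdot\boldsymbol{n}_h|$, which is $O(h^k)$ on $S$. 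Concluding from $h^{3k}\le h^{2k+1}$ for $k\ge1$ silently treats $\|\xi_h\|_{L^2(S)}$ as bounded by $\|\xi_h\|_{DG}$. That is exactly the step that needs an argument, and it is where $k\ge2$ is actually used.

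The missing idea, which is how the paper handles $T_3$, is to drop the weight on the $\xi_h$ side entirely and control the trace of $\xi_h$ through the volume part of the DG norm.
\begin{itemize}
\item Keep the full pointwise bound $|\Omega\cdot\boldsymbol{n}_h|\lesssim h^k$ on $S$ from (\ref{eq3.17}) outside the integral, and apply unweighted Cauchy--Schwarz on $S$.
\item Bound $\|\tilde I\|_{L^2(S)}\le\|\tilde I\|_{L^\infty(\Gamma_h)}\mathrm{Area}(S)^{1/2}\lesssim h^{k/2}\|\tilde I\|_{L^\infty(\Gamma_h)}$ by Lemma \ref{lemma3.5}.
\item Use the trace inequality, the inverse estimate on the boundary elements touching $S$, and $\sigma\ge m$ to get $\|\xi_h\|_{L^2(S)}\lesssim h^{-1/2}\|\xi_h\|_{L^2(D_h)}\lesssim h^{-1/2}\|\xi_h\|_{DG}$.
\end{itemize}
The product is $h^{k}\cdot h^{k/2}\cdot h^{-1/2}=h^{(3k-1)/2}$, which is $\lesssim h^{k+1/2}$ exactly when $k\ge2$. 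Your fallback can be completed the same way, pairing $h^{3k/2}$ with this $h^{-1/2}$ loss, but it then also needs $k\ge2$, not $k\ge1$.

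There is a smaller flaw in your $E_1$. On the strip $D_h\setminus D$, Cauchy--Schwarz against $\|\xi_h\|_{L^2}$ gives only $|D_h\setminus D|^{1/2}\sim h^{(k+1)/2}$, not $h^{k+1}$. You can fix this in two ways.
\begin{itemize}
\item Use an $L^\infty$ inverse estimate on the strip, giving $\|\xi_h\|_{L^2(D_h\setminus D)}\lesssim h^{k/2}\|\xi_h\|_{L^2(D_h)}$ and hence $h^{k+1/2}$.
\item Avoid the strip, as the paper does. Since $\Phi_h^{-1}$ maps $D_h$ into $D$, where $\tilde f=f$, you have $f_h=\tilde f\circ\Phi_h^{-1}$ on all of $D_h$. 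Then $\tilde f-f_h=\tilde f-\tilde f\circ\Phi_h^{-1}$ is pointwise $O(h^{k+1})$ by (\ref{eq2.2}), at the cost of a $\|\tilde f\|_{W^{1,\infty}}$ factor.
\end{itemize}
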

    \begin{proof}\quad
    Applying the triangle inequality, we decompose the DG error between $\tilde{I}$ and $I_h$ into two parts
    \begin{align}\label{eq3.19}
        ||\tilde{I} - I_h||_{DG}
        &\leq ||\tilde{I} - \Lambda\tilde{I}||_{DG} + ||\Lambda\tilde{I} - I_h||_{DG}\nonumber\\
        &= ||\tilde{I} - \Lambda\tilde{I}||_{DG} + \frac{\mathcal{B}(\Lambda\tilde{I} - I_h,\Lambda\tilde{I} - I_h)}{||\Lambda\tilde{I} - I_h||_{DG}}\nonumber \text{\quad (by the definition of $||\cdot||_{DG}$)}\\
        &= ||\tilde{I} - \Lambda\tilde{I}||_{DG} + \frac{\mathcal{B}(\Lambda\tilde{I} -\tilde{I} + \tilde{I}- I_h,\Lambda\tilde{I} - I_h)}{||\Lambda\tilde{I} - I_h||_{DG}}\nonumber\\
        &\lesssim ||\tilde{I} - \Lambda\tilde{I}||_{DG} + \frac{\mathcal{B}(\tilde{I} - I_h,\Lambda\tilde{I} - I_h)}{||\Lambda\tilde{I} - I_h||_{DG}}\nonumber\quad\text{(by the Lemma \ref{lemma3.2})}\\
        &=:L_1 + L_2.
    \end{align}

    The term $L_1$ is directly bounded by Lemma \ref{lemma3.1}, so it remains to estimate $L_2$. To this end, we evaluate the bilinear form $\mathcal{B}(\cdot,\cdot)$ defined in (\ref{eq2.5}) at the extended solution $\tilde{I}$. Using the definition of $\tilde{f}$ from Section \ref{sec:3} and the continuity of $\tilde{I}$ across interior faces, we arrive at
    \begin{align}\label{eq3.20}
        & \mathcal{B}(\tilde{I},v_h)\nonumber\\
        =& \sum_{K\in\mathcal{T}_h}
        \left(
        \int_{K}(\Omega\cdot\nabla \tilde{I}+\sigma \tilde{I})v_hdx \right)
        -\sum_{K\in\mathcal{T}_h}\left(\int_{\partial_{-}K\cap\Gamma^{-}_h}(\Omega\cdot \boldsymbol{n_h})\tilde{I}v_hds \right)\nonumber\\
        &- \sum_{K\in\mathcal{T}_h}\left(\int_{\partial_{-}K \setminus \Gamma_h}(\Omega\cdot \boldsymbol{n_h}) \llbracket \tilde{I} \rrbracket v_h^{+}ds
        \right)\nonumber\\
        =& \sum_{K\in\mathcal{T}_h}
        \left(
        \int_{K}\tilde{f}v_hdx \right)
        - \sum_{K\in\mathcal{T}_h}\left(\int_{\partial_{-}K\cap\Gamma^{-}_h}(\Omega\cdot \boldsymbol{n_h})\tilde{I}v_hds \right).
    \end{align}
    Here and in the sequel, we set $v_h = \Lambda\tilde{I} - I_h$ for convenience.
    Then, recalling the discrete problem (2.4) and the property that $\tilde{f}=f$ in $D_h$, we observe
    \begin{align}\label{eq3.21}
        &\mathcal{B}(I_h,v_h)= l(v_h)\nonumber\\
        &=\sum_{K\in\mathcal{T}_h}
        \left(
        \int_{K}f_hv_hdx\right) -
        \sum_{K\in\mathcal{T}_h}
        \left(\int_{\partial_{-}K\cap\Gamma^{-}_h}(\Omega\cdot \boldsymbol{n_h})g_hv_hds
        \right)\nonumber\\
        &=\sum_{K\in\mathcal{T}_h}
        \left(
        \int_{K}f\circ \Phi_h^{-1}v_hdx\right) -
        \sum_{K\in\mathcal{T}_h}
        \left(\int_{\partial_{-}K\cap\Gamma^{-}_h}(\Omega\cdot \boldsymbol{n_h})g_hv_hds
        \right)\nonumber\\
        &=\sum_{K\in\mathcal{T}_h}
        \left(
        \int_{K}\tilde{f}\circ \Phi_h^{-1}v_hdx\right) -
        \sum_{K\in\mathcal{T}_h}
        \left(\int_{\partial_{-}K\cap\Gamma^{-}_h}(\Omega\cdot \boldsymbol{n_h})g_hv_hds
        \right).
    \end{align}

    Combining (\ref{eq3.20}) and (\ref{eq3.21}) yields
    \begin{align}\label{eq3.22}
        &\mathcal{B}(\tilde{I} - I_h, v_h)
        = \sum_{K\in\mathcal{T}_h}
        \left(
        \int_{K}(\tilde{f}-\tilde{f}\circ \Phi_h^{-1})v_hdx\right)
        \nonumber\\
        & \ +
        \sum_{K\in\mathcal{T}_h}
        \left(\int_{\partial_{-}K\cap\Gamma^{-}_h}(\Omega\cdot \boldsymbol{n_h})g_hv_hds
        - \int_{\partial_{-}K\cap\Gamma^{-}_h}(\Omega\cdot \boldsymbol{n_h})\tilde{I}v_hds\right)\nonumber\\
        &=:E_1 + E_2,
    \end{align}
    where the first term is estimated as
    \begin{align}\label{eq3.23}
        E_1 = &\sum_{K\in\mathcal{T}_h}
        \left(
        \int_{K}(\tilde{f}-\tilde{f}\circ \Phi_h^{-1})v_hdx\right)
        \leq h^{k+1}||\tilde{f}||_{W^{1, \infty}(\mathcal{T}_h)}||v_h||_{DG}.
    \end{align}

    For $E_2$, observe that on $\Phi_h(\Gamma^-)$ we have
    \begin{align*}
      g_h = \tilde{g}\circ\Phi_h^{-1} = g\circ\Phi_h^{-1} = I\circ\Phi_h^{-1} = I^\sharp, \ \ on \ \ \Phi_h(\Gamma^-).
    \end{align*}
    This allows us to split $E_2$ into three contributions according to the location on $\Gamma_h^-$
    \begin{align}\label{eq3.24}
        E_2=& \sum_{K\in\mathcal{T}_h}
        \left(\int_{\partial_{-}K\cap\Gamma^{-}_h}(\Omega\cdot \boldsymbol{n_h})g_hv_hds
        - \int_{\partial_{-}K\cap\Gamma^{-}_h}(\Omega\cdot \boldsymbol{n_h})\tilde{I}v_hds\right)\nonumber\\
        =& \sum_{K\in\mathcal{T}_h}
        \left(\int_{\partial_{-}K\cap\left(\Gamma^{-}_h\cap\Phi_h(\Gamma^-)\right)}(\Omega\cdot \boldsymbol{n_h})(I^\sharp - \tilde{I}) v_hds\right)\nonumber\\
        &+\sum_{K\in\mathcal{T}_h}\left(\int_{\partial_{-}K\cap\left(\Gamma^{-}_h \setminus \Phi_h(\Gamma^-)\right)}(\Omega\cdot \boldsymbol{n_h})g_hv_hds\right)\nonumber\\
        &-\sum_{K\in\mathcal{T}_h}
        \left(\int_{\partial_{-}K\cap\left(\Gamma^{-}_h \setminus \Phi_h(\Gamma^-)\right)}(\Omega\cdot \boldsymbol{n_h})\tilde{I}v_hds\right)\nonumber\\
        =&: T_1 + T_2 + T_3.
    \end{align}

    The term $T_1$ is directly controlled by Lemma \ref{lemma3.3}, which gives $T_1 \lesssim h^{k+1/2}\|\tilde{I}\|_{L^2(\mathcal{T}_h)}$.
    For $T_2$, recall that $g_h = \tilde{g}\circ\Phi_h^{-1}$ and $\tilde{g}$ is the zero extension of $g$ on $\Gamma$.  Hence $T_2 = 0$.

    It remains to bound $T_3$.  Let $D_c$ denote the collection of boundary elements that intersect the characteristic boundary $\Gamma_h^0$, i.e.,
    \[
    D_c = \bigl\{ K\in\mathcal{T}_h : \partial K\cap \Gamma_h^0 \neq \emptyset \bigr\}.
    \]
    Using the estimate (\ref{eq2.3}), the Cauchy-Schwarz inequality, and Lemma \ref{lemma3.6}, we obtain
    \begin{align}\label{eq3.25}
        T_3 &=
        -\sum_{K\in\mathcal{T}_h}
        \left(\int_{\partial_{-}K\cap S}(\Omega\cdot \boldsymbol{n_h})\tilde{I}v_hds\right)\nonumber\\
        &\lesssim h^k\sum_{K\in\mathcal{T}_h}||\tilde{I}||_{L^2\left(\partial_{-}K\cap S\right)}
         ||v_h||_{L^{2}\left(\partial_{-}K\cap S\right)}\nonumber\\
        &\lesssim h^k
        \left(\sum_{K\in\mathcal{T}_h}
        ||\tilde{I}||^2_{L^2\left(\partial_{-}K\cap S\right)}
        \right)^{\frac{1}{2}}
        \left(
        \sum_{K\in\mathcal{T}_h}||v_h||^2_{L^{2}\left(\partial_{-}K\cap S\right)}
        \right)^{\frac{1}{2}}\nonumber\\
        &\lesssim h^k
        ||\tilde{I}||_{L^\infty(\Gamma_h)}Area(\Gamma_h^-\Delta\Phi_h(\Gamma^-))
        \left(\sum_{K\in D_c}
        \left\{h_K^{-1}||v_h||_{L^2(K)}^2 + h_K||v_h||_{H^1(K)}^2\right\}
        \right)^{\frac{1}{2}}\nonumber\\
        &\lesssim h^{k}\cdot h^{\frac{k-1}{2}}||\tilde{I}||_{L^\infty(\Gamma_h)}||v_h||_{DG}
        \lesssim h^{k+\frac{1}{2}}||\tilde{I}||_{L^\infty(\Gamma_h)}||v_h||_{DG}
    \end{align}

    Here $S := \Gamma^{-}_h\setminus\Phi_h(\Gamma^-)$ for brevity, and we have applied the trace inequality together with the inverse estimate to convert the boundary integrals into elementwise $L^2$ and $H^1$ norms.

    Finally, gathering the estimates (\ref{eq3.1}), (\ref{eq3.8}), (\ref{eq3.23}) and (\ref{eq3.25}), we arrive at the desired bound (\ref{eq3.18}).  This completes the proof of Theorem \ref{theorem3.1}.
    \end{proof}

\section{Numerical experiments}\label{sec:experiments}
    In this section, we carry out numerical experiments on curved domains to verify the theoretical results. These experiments are conducted by employing the open-source finite element software package NUDG++ \cite{HESTHAVEN2008} for 2D case and the adaptive finite element package Parallel Hierarchical Grid (PHG) \cite{ZHANG2009} for 3D case.

    \subsection{Example two-dimensional(2D)}

    To construct a representative test case, we make the following specific choices. For the RTE with the absorption coefficient $\sigma = 1$, the discrete direction is set to the unit vector $\Omega = (\frac{\sqrt{3}}{2}, \frac{1}{2})^T$, the exact solution is taken as $I(x,y) = \sin(\pi x+\pi y) + x^2 + y^2 + xy + 5$, and the original domain is the curved disc defined by

    \begin{align*}
    D = \{(x, y): x^2 + y^2 < 0.25\}.
    \end{align*}

    The 2D straight mesh $\mathcal{T}_{h_1}\sim \mathcal{T}_{h_6}$ are generated by Gmsh with global mesh size factor $2, 1, 0.5, 0.25, 0.125$ and $0.0625$, receptively. Moreover, utilizing the approximation $h_i\varpropto Ndof_{i}^{-\frac{1}{2}} \ (i=1,2,...,6)$, where $Ndof_i$ is the number of Degree of Freedom(DOF), we could estimate the error convergence rate by

    \begin{equation}\label{eq4.1}
      rate_i = \frac{\ln(error_i/error_{i-1})}{\ln(h_i/h_{i-1})}, \quad i = 2, ..., 6.
    \end{equation}

    \begin{figure}[b]
    \centering
    {
        \includegraphics[width=0.4\textwidth]{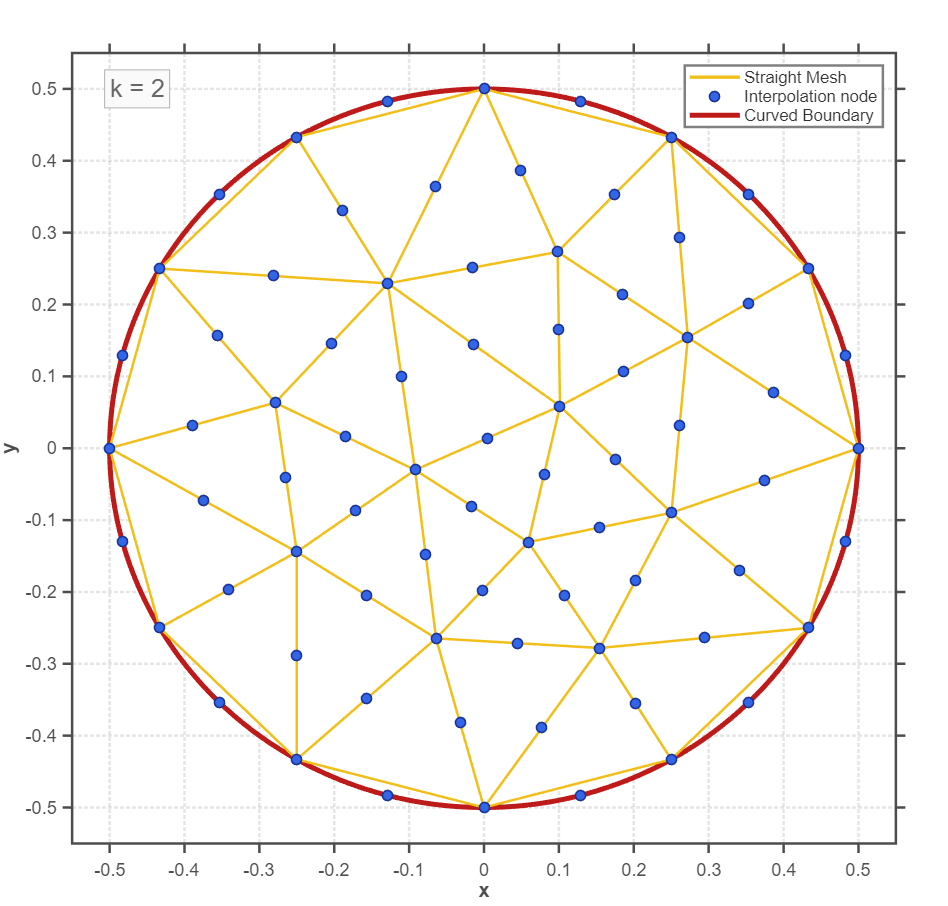}
    }
    \qquad
    \label{fig:N=2(order)}
    {
        \includegraphics[width=0.4\textwidth]{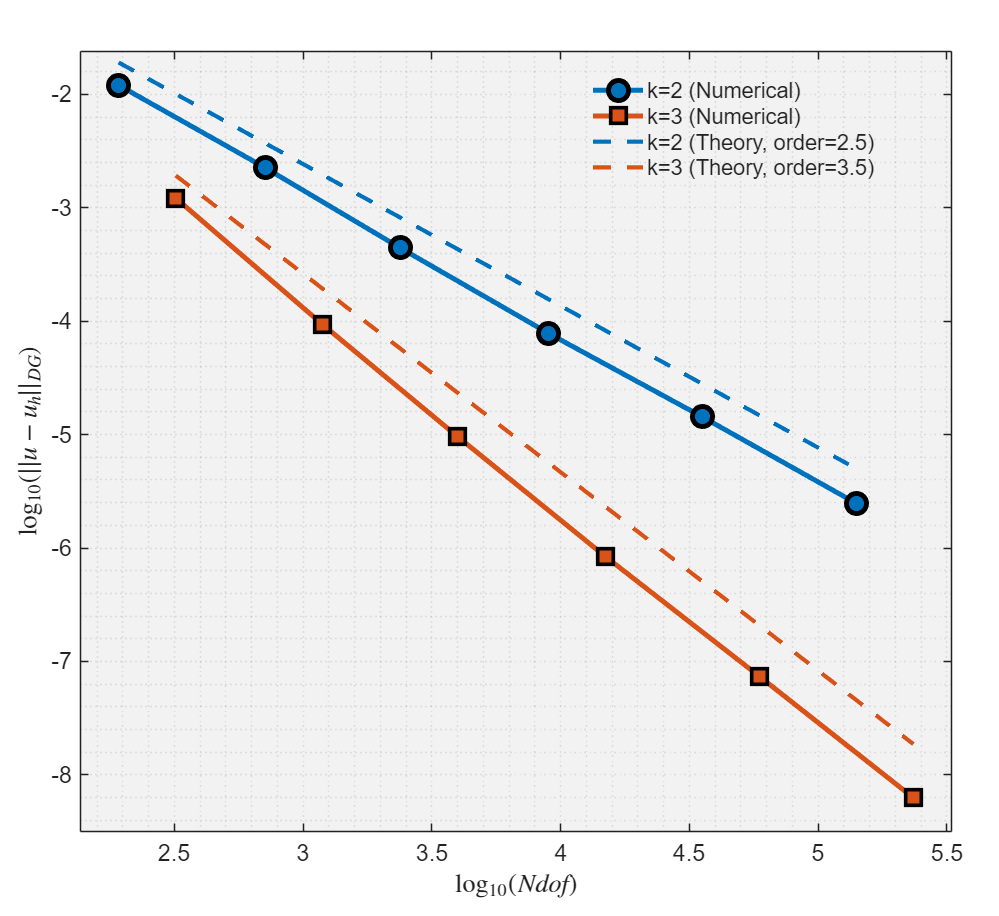}
    }
    \caption{Isoparametric mapping and numerical results in 2D.}
    \end{figure}

    \begin{table}[t]
    \centering
    \caption{Convergence rate on mesh $\mathcal{T}_h$ of unit circle with $k=2$ \& $k=3$}
    \begin{tabular}{llllllll}
    \hline\noalign{\smallskip}
    $k$ & Mesh & Nelem & Ndof & $L^2$ error & $L^2$ rate & DG error & DG rate \\
    \hline\noalign{\smallskip}
    \multirow{6}{*}{2} & $\mathcal{T}_{h_1}$ & 32 & 192 & 3.3747e-03 & - & 1.2060e-02 & - \\\noalign{\smallskip}
    \noalign{\smallskip}
    & $\mathcal{T}_{h_2}$ & 119 & 714 & 4.5822e-04 & 3.0405 & 2.2467e-03 & 2.5589 \\\noalign{\smallskip}
    \noalign{\smallskip}
    & $\mathcal{T}_{h_3}$ & 397 & 2382 & 6.1777e-05 & 3.3264 & 4.4307e-04 & 2.6950 \\\noalign{\smallskip}
    \noalign{\smallskip}
    & $\mathcal{T}_{h_4}$ & 1497 & 8982 & 7.9332e-06 & 3.0927 & 7.8135e-05 & 2.6148 \\\noalign{\smallskip}
    \noalign{\smallskip}
    & $\mathcal{T}_{h_5}$ & 5908 & 35448 & 1.0275e-06 & 2.9776 & 1.4367e-05 & 2.4672 \\\noalign{\smallskip}
    \noalign{\smallskip}
    & $\mathcal{T}_{h_6}$ & 23452 & 140712 & 1.3519e-07 & 2.9424 & 2.4791e-06 & 2.5489 \\
    \hline\noalign{\smallskip}
    \multirow{6}{*}{3} & $\mathcal{T}_{h_1}$ & 32 & 320 & 2.7108e-04 & - & 1.2164e-03 & - \\\noalign{\smallskip}
    \noalign{\smallskip}
    & $\mathcal{T}_{h_2}$ & 119 & 1190 & 1.5649e-05 & 4.3429 & 9.3544e-05 & 3.9062 \\\noalign{\smallskip}
    \noalign{\smallskip}
    & $\mathcal{T}_{h_3}$ & 397 & 3970 & 1.1180e-06 & 4.3806 & 9.6579e-06 & 3.7693 \\\noalign{\smallskip}
    \noalign{\smallskip}
    & $\mathcal{T}_{h_4}$ & 1497 & 14970 & 7.0975e-08 & 4.1543 & 8.3750e-07 & 3.6844 \\\noalign{\smallskip}
    \noalign{\smallskip}
    & $\mathcal{T}_{h_5}$ & 5908 & 59080 & 4.5273e-09 & 4.0095 & 7.4028e-08 & 3.5342 \\\noalign{\smallskip}
    \noalign{\smallskip}
    & $\mathcal{T}_{h_6}$ & 23452 & 234520 & 2.9106e-10 & 3.9812 & 6.2822e-09 & 3.5785 \\
    \hline
    \end{tabular}\label{table1}
    \end{table}

    As shown in Table \ref{table1}, the error convergence rates under the DG norms agree perfectly with the Theorem \ref{theorem3.1} for $k=2$ and $k=3$.

    \begin{figure}[!htbp]
        \centering
        \label{fig:straight_convex}
        {
            \includegraphics[width=0.3\textwidth]{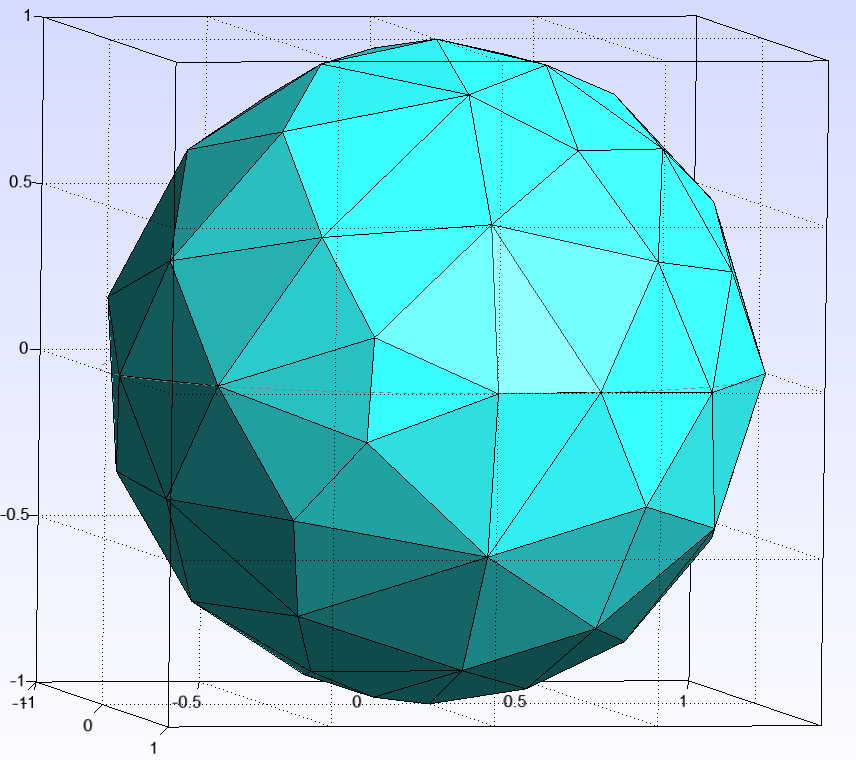}
        }
        \hfill
        \label{fig:straight_curved}
        {
            \includegraphics[width=0.3\textwidth]{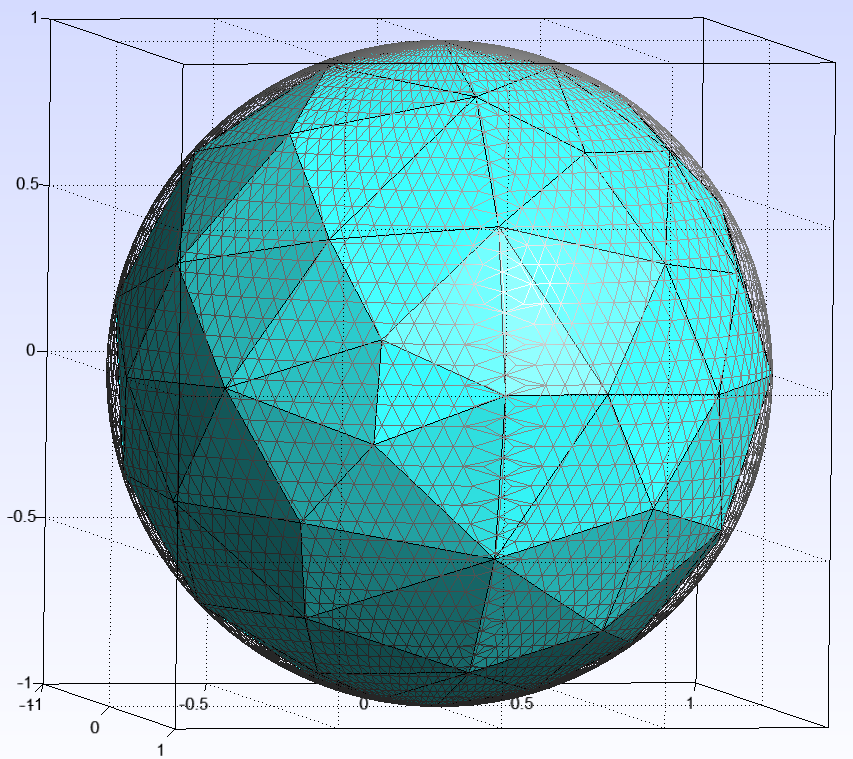}
        }
        \hfill
        \label{fig:curved_mesh}
        {
            \includegraphics[width=0.3\textwidth]{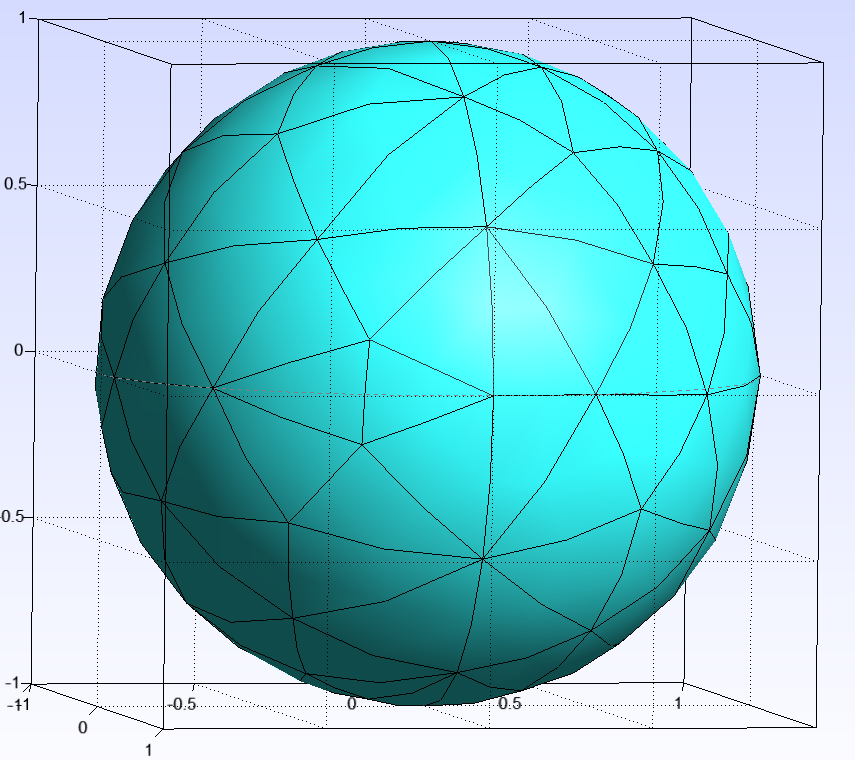}
        }
        \label{fig:second_figures}
    \caption{Mesh and domain evolution through isoparametric mapping in 3D.}
    \end{figure}

    \subsection{Example three-dimensional(3D)}
    For 3D case, we conduct a series of comparative experiments on $\widehat{\mathcal{T}}_h$ and $\mathcal{T}_h$ to demonstrate the advantages of computation on curved domains. For brevity, we select the absorption coefficient $\sigma = 1$, the discrete direction $\Omega = \left( \frac{1}{\sqrt{3}}, \frac{1}{\sqrt{3}}, \frac{1}{\sqrt{3}} \right)^T$, the exact solution $I(x, y, z) = \sin(\pi x + \pi y + \pi z) + x^2 + y^2 + z^2 + x y z + 5$ and the original domain is taken as the curved region defined by
    \begin{align*}
        D = \{(x, y, z): x^2 + y^2 + z^2 <1\}.
    \end{align*}

    In the actual calculation, we transform the integrals of curved elements $K\in\mathcal{T}_h$ and surface integrals to those of straight elements $\widehat{K}\in\mathcal{\widehat{T}}_h$ for calculation \cite{LI2025}
    \begin{equation}\label{eq4.2}
      \int_K fd\boldsymbol{x} = \int_{\widehat{K}} \hat{f}\mathcal{J}_{F_h(x)}d\hat{\boldsymbol{x}},\quad
      \int_F fd\boldsymbol{s} = \int_{\widehat{F}} \hat{f}\mathcal{J}_{F_h(\hat{x})}||\mathbb{J}_{F_h(\hat{x})}^{-T}\hat{\boldsymbol{n}}||d\hat{\boldsymbol{s}}.
    \end{equation}
    where $\mathbb{J}_{F_h(\hat{x})}$ and $\mathcal{J}_{F_h(x)}$ are the Jacobian matrix and its determinant of $F_h$, $\hat{\boldsymbol{n}}$ is the unit outer normal of $\widehat{F}$.
    Moreover, employing the approximation $h_i\varpropto N_{i}^{-\frac{1}{3}} \ (i=1,2,...,6)$, we also use the formula (\ref{eq4.1}) to compute the convergence rate.

    \begin{table}[!htbp]
    \centering
    \caption{Convergence rate on straight mesh $\mathcal{\widehat{T}}_h$ of convex polyhedron domain $\widehat{D}_h$ with $k=2$}
    \begin{tabular}{l l l l l l l}
    \hline\noalign{\smallskip}
    Mesh & Nelem & Ndof & $L^2$ error & $L^2$ rate & DG error & DG rate \\\noalign{\smallskip}
    \hline\noalign{\smallskip}
    $\mathcal{\widehat{T}}_{h_1}$ & 48 & 480 & 3.4686e-01 & - & 5.0478e-01 & - \\\noalign{\smallskip}
    \hline\noalign{\smallskip}
    $\mathcal{\widehat{T}}_{h_2}$ & 384 & 3840 & 1.2340e-01 & 1.4910 & 2.0939e-01 & 1.2695 \\\noalign{\smallskip}
    \hline\noalign{\smallskip}
    $\mathcal{\widehat{T}}_{h_3}$ & 3072 & 30720 & 1.5694e-02 & 2.9751 & 4.7381e-02 & 2.1438 \\\noalign{\smallskip}
    \hline\noalign{\smallskip}
    $\mathcal{\widehat{T}}_{h_4}$ & 24576 & 245760 & 1.6289e-03 & 3.2682 & 8.5384e-03 & 2.4723 \\\noalign{\smallskip}
    \hline\noalign{\smallskip}
    $\mathcal{\widehat{T}}_{h_5}$ & 196608 & 1966080 & 1.9103e-04 & 3.0920 & 1.5069e-03 & 2.5024 \\\noalign{\smallskip}
    \hline
    \end{tabular}\label{table2}
    \end{table}
    The straight mesh $\widehat{\mathcal{T}}_{h_1}$ in Table {\ref{table2}} and Table {\ref{table3}} is generated by Netgen, $\widehat{\mathcal{T}}_{h_2}\sim \widehat{\mathcal{T}}_{h_5}$ are derived from the command phgRefineAllElements with $\widehat{\mathcal{T}}_{h_1}$. The curved mesh $\mathcal{T}_{h_i}$ in Table {\ref{table4}} is derived from $\widehat{\mathcal{T}}_{h_i}(i=1,2,...,6)$ by the isoparametric mapping $F_h$.
    \begin{table}[!htbp]
    \centering
    \caption{Convergence rate on straight mesh $\mathcal{\widehat{T}}_h$ of curved domain $D_h$ with $k=2$}
    \begin{tabular}{l l l l l l l}
    \hline\noalign{\smallskip}
    Mesh & Nelem & Ndof & $L^2$ error & $L^2$ rate & DG error & DG rate \\\noalign{\smallskip}
    \hline\noalign{\smallskip}
    $\mathcal{\widehat{T}}_{h_1}$ & 48 & 480 & 4.8805e-01 & - & 7.0139e-01 & - \\\noalign{\smallskip}
    \hline\noalign{\smallskip}
    $\mathcal{\widehat{T}}_{h_2}$ & 384 & 3840 & 1.6281e-01 & 1.5838 & 2.7039e-01 & 1.3752 \\\noalign{\smallskip}
    \hline\noalign{\smallskip}
    $\mathcal{\widehat{T}}_{h_3}$ & 3072 & 30720 & 2.8623e-02 & 2.5079 & 6.2308e-02 & 2.1176 \\\noalign{\smallskip}
    \hline\noalign{\smallskip}
    $\mathcal{\widehat{T}}_{h_4}$ & 24576 & 245760 & 6.2782e-03 & 2.1888 & 1.3013e-02 & 2.2595 \\\noalign{\smallskip}
    \hline\noalign{\smallskip}
    $\mathcal{\widehat{T}}_{h_5}$ & 196608 & 1966080 & 1.5401e-03 & 2.0273 & 2.8423e-03 & 2.1948 \\\noalign{\smallskip}
    \hline
    \end{tabular}\label{table3}
    \end{table}

    We compare the convergence rates for different mesh-domain configurations in Table \ref{table2} and Table \ref{table3}. While a straight mesh $\mathcal{\widehat{T}}_{h}$ on a convex polyhedron domain $\widehat{D}_h$ yields optimal convergence in both the $L^2$ and DG norms \cite{BRENNER2008,CIARLET1972,PAZNER2021}, applying the same mesh to corresponding curved domain $D_h$ results in degraded performance. The inaccuracy of the straight mesh in capturing the boundary geometry leads to the dominance of the geometric approximation error \cite{CIARLET1972}.
    \begin{table}[!htbp]
    \centering
    \caption{Convergence rate on curved mesh $\mathcal{T}_h$ of curved domain $D_h$ with $k=2$}
    \begin{tabular}{l l l l l l l}
    \hline\noalign{\smallskip}
    Mesh & Nelem & Ndof & $L^2$ error & $L^2$ rate & DG error & DG rate \\\noalign{\smallskip}
    \hline\noalign{\smallskip}
    $\mathcal{T}_{h_1}$ & 48 & 480 & 2.5781e-01 & - & 7.3825e-01 & - \\\noalign{\smallskip}
    \hline\noalign{\smallskip}
    $\mathcal{T}_{h_2}$ & 384 & 3840 & 6.4164e-02 & 2.0065 & 2.0012e-01 & 1.8832 \\\noalign{\smallskip}
    \hline\noalign{\smallskip}
    $\mathcal{T}_{h_3}$ & 3072 & 30720 & 1.2140e-02 & 2.4020 & 4.5983e-02 & 2.1217 \\\noalign{\smallskip}
    \hline\noalign{\smallskip}
    $\mathcal{T}_{h_4}$ & 24576 & 245760 & 1.5993e-03 & 2.9243 & 8.5462e-03 & 2.4277 \\\noalign{\smallskip}
    \hline\noalign{\smallskip}
    $\mathcal{T}_{h_5}$ & 196608 & 1966080 & 1.9688e-04 & 3.0221 & 1.5123e-03 & 2.4985 \\\noalign{\smallskip}
    \hline
    \end{tabular}\label{table4}
    \end{table}

    The results in Table \ref{table4} illustrate the optimal convergence rate in the DG norm achieved by a curved mesh $\mathcal{T}_h$ on a curved domain $D_h$, which confirms the Theorem \ref{theorem3.1}. This success demonstrates that isoparametric elements by high-order methods are essential for achieving the optimal accuracy on curved domains.

\section{Conclusion}
We have presented a rigorous error analysis for the isoparametric upwind DG method applied to the RTE on curved domains. By estimating the approximation error of the isoparametric finite element space and the consistency error of the upwind DG scheme, we derived an optimal convergence rate of $\mathcal{O}(h^{k+\frac{1}{2}})$ in the DG norm. This result balances the geometric approximation error and the numerical discretization error. Two-dimensional numerical experiments confirmed the theoretical predictions for $k=2$ and $k=3$. Furthermore, three-dimensional examples demonstrated the advantages of isoparametric elements for high-order methods on curved domains. The analysis and numerical results establish the efficacy of the proposed approach for problems involving curved geometries.

\end{document}